\documentclass[12pt,french,english]{article}
\usepackage{fourier}

\usepackage[T1]{fontenc}
\usepackage[utf8]{inputenc}
\usepackage{babel}
\usepackage{mathrsfs}
\usepackage{url}
\usepackage{amsmath}
\usepackage{amsthm}
\usepackage{amssymb}
\usepackage{geometry}
\usepackage{fancyhdr}
\usepackage{setspace}
\usepackage[bookmarks=true,bookmarksnumbered=true,bookmarksopen=true,bookmarksopenlevel=4,
 breaklinks=true,pdfborder={0 0 1},backref=section,colorlinks=false]
 {hyperref}

\makeatletter
\numberwithin{equation}{section}
\newcommand{\lyxaddress}[1]{
	\par {\raggedright #1
	\vspace{1.4em}
	\noindent\par}
}

\@ifundefined{date}{}{\date{}}

\usepackage{babel}
\usepackage{varioref}
\usepackage{float}
\usepackage{mathrsfs}
\usepackage{url}
\usepackage{amsthm}

\usepackage{fancyhdr}

\numberwithin{figure}{section}
\numberwithin{table}{section}
\numberwithin{equation}{section}

\newcommand{\p}{\ensuremath{\partial}}
\newcommand{\R}{\mathbb{R}}

\makeatother

\theoremstyle{plain}
\newtheorem{thm}{\protect\theoremname}[section]
\newtheorem{lem}[thm]{\protect\lemmaname}
\theoremstyle{remark}
\newtheorem{notation}[thm]{\protect\notationname}
\theoremstyle{definition}
\newtheorem{defn}[thm]{\protect\definitionname}
\theoremstyle{plain}
\newtheorem{prop}[thm]{\protect\propositionname}
\newtheorem{cor}[thm]{\protect\corollaryname}
\addto\captionsenglish{\renewcommand{\corollaryname}{Corollary}}
\addto\captionsenglish{\renewcommand{\definitionname}{Definition}}
\addto\captionsenglish{\renewcommand{\lemmaname}{Lemma}}
\addto\captionsenglish{\renewcommand{\notationname}{Notation}}
\addto\captionsenglish{\renewcommand{\propositionname}{Proposition}}
\addto\captionsenglish{\renewcommand{\theoremname}{Theorem}}

\providecommand{\corollaryname}{Corollary}
\providecommand{\definitionname}{Definition}
\providecommand{\lemmaname}{Lemma}
\providecommand{\notationname}{Notation}
\providecommand{\propositionname}{Proposition}
\providecommand{\theoremname}{Theorem}

\begin{document}
\title{Global classical solutions for the general planar Broadwell model
with arbitrary velocity orientation: a fixed-point framework }
\author{Koudzo Togbévi Selom, Sobah\textsuperscript{1,{*}} and Amah Séna,
d'Almeida\textsuperscript{2}}
\maketitle

\lyxaddress{1,2 : Department of Mathematics, University of Lomé, Togo\\
 {*} corresponding author: deselium@gmail.com}
\begin{abstract}
The present work extends a fixed-point framework previously developed
for the standard planar four-velocity Broadwell model to the general
four-velocity configuration with an arbitrary orientation of the discrete
velocities. This provides a non-axis-aligned test of the framework
and leads to a global-in-time existence and uniqueness result for
classical solutions of the associated initial-boundary value problem
in a rectangular domain. Under suitable assumptions on the initial
and boundary data, we establish the existence and uniqueness of a
global-in-time classical solution. The analysis identifies the geometric
and analytic properties of the fixed-point approach that remain valid
independently of the orientation of the discrete velocities. The results
demonstrate the applicability of the framework to the general planar
four-velocity Broadwell model beyond the standard axis-aligned configuration.
\end{abstract}
\textbf{Key words and phrases:} discrete velocity(Boltzmann) models,
initial-boundary value problems, existence and uniqueness, fixed point
theorems.

\textbf{2020 Mathematics Subject Classification:} 76A02, 76M28

\section*{Introduction}

Initiated by Carleman \cite{carleman} and extensively developed by
Broadwell \cite{3,4} and Gatignol \cite{1} to approximate the Boltzmann
equation, discrete velocity models have seen robust global existence
results in one spatial dimension \cite{5,kawasima,tartar,Cabanne-kawashima,7,2,6,11,14,compte rendu meca},
whereas their non-stationary multidimensional counterpart remains
largely open.This paper stands as a direct analytical continuation
of the mathematical frameworks recently developed in \cite{Sob Alm}
and \cite{sob almeida 2026 arxiv 1} for multidimensional discrete
kinetic equations. In the study of the non-stationary multidimensional
four-velocity Broadwell model, establishing global-in-time classical
solutions remains a major challenge. Recently, a localized fixed-point
methodology was introduced in \cite{Sob Alm} to secure classical
solutions for the general four-velocity planar Broadwell model, though
the results were restricted to a local time horizon. Parallelly, the
authors achieved a global-in-time extension in \cite{sob almeida 2026 arxiv 1},
but the analysis was strictly confined to a standard axis-aligned
velocity layout in a rectangular domain.

The primary objective of the present work is to bridge the gap between
these two recent advancements. Rather than restricting the mathematical
analysis to a specific coordinate configuration, we demonstrate that
the fixed-point framework and the a priori estimation techniques established
in \cite{sob almeida 2026 arxiv 1} can be fully generalized. We test
the limits of this methodology by applying it directly to the model
introduced in \cite{Sob Alm}, which features an arbitrary orientation
angle $\theta\in]0,\pi/2[$.

We prove that despite the significant geometric complexities of the
model, the fixed-point methodology remains exceptionally robust. We
establish that the same formalism used in \cite{sob almeida 2026 arxiv 1}
can still be used to solve the actual problem to establish the global-in-time
existence, uniqueness, and non-negativity of classical solutions under
a meticulously computed threshold condition on the initial-boundary
norms and their derivatives. This result confirms the versatility
and broader applicability of our approach.

The structural blueprint of this study is mapped as follows. Some
geometric preliminaries is introduced in Section \ref{ozppappa}.
The hyperbolic system is formulated in Section \ref{sec:Mixed-problem-and}.
Section \ref{sec:Integral-Operators-and} reformulates the hyperbolic
system into an equivalent fixed-point problem via integral operators.
Section \ref{sec:Local-solution} discusses the uniqueness and existence
of solutions.

\section{Preliminaries}\label{ozppappa}

Let $K$ be a compact convex non-empty subset of $\R^{d},$$d$ integer,
$d\geq1$ and let $\mathcal{H}\equiv\{H_{m}\}^{m_{0}}_{m=1}$ be a
finite family of pair-wise distinct affine hyper-planes of $\R^{d}$.
It is a standard result in the theory of hyper-plane arrangements
that the connected components of $\Omega=\R^{d}\setminus\bigcup^{m_{0}}_{m=1}H_{i}$
form a finite collection of open convex sets. Thus, the connected
components of $\ensuremath{\operatorname{Int}(K)\setminus\bigcup^{m_{0}}_{i=1}H_{i}}$
are exactly given by the non-empty intersections of $\ensuremath{\operatorname{Int}(K)}$
with the connected components of $\Omega$. Since $\operatorname{Int}(K)$
is open and convex, it follows that $\ensuremath{\operatorname{Int}(K)\setminus\bigcup^{m_{0}}_{i=1}H_{i}}$
also consists of a finite collection of non-empty, open, and convex
connected components. 
\begin{lem}
\noindent\label{lemosp}Let $v\in\mathbb{R}^{d}$ be a non-zero vector
that is not in the direction space of any hyper-plane $H_{m}$ for
all $i\in\{1,\dots,m_{0}\}$. For any segment $[x,y]\subset\mathbb{R}^{d}$
and for a sufficiently small $t\in\mathbb{R}\setminus\{0\}$, the
translate of segment $[x,y]$ by the vector $tv$ cannot be contained
in any hyper-plane $H_{m}$. 
\end{lem}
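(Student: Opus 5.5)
Write each hyperplane as $H_{m}=\{z\in\mathbb{R}^{d}:\ \langle a_{m},z\rangle=b_{m}\}$, with $a_{m}\neq0$ a normal vector and $b_{m}\in\mathbb{R}$. Its direction space is $a_{m}^{\perp}$. The hypothesis that $v$ lies in no direction space then reads $\langle a_{m},v\rangle\neq0$ for every $m\in\{1,\dots,m_{0}\}$. The plan is to reduce the statement to a single point of the segment. If the translate $[x,y]+tv$ is contained in $H_{m}$, then in particular $x+tv\in H_{m}$, which means
\[
\langle a_{m},x\rangle+t\,\langle a_{m},v\rangle=b_{m}.
\]
This is an affine equation in $t$ with nonzero leading coefficient, so it has exactly one solution,
\[
t_{m}=\frac{b_{m}-\langle a_{m},x\rangle}{\langle a_{m},v\rangle}.
\]

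Next I would collect the \emph{bad} parameters
\[
T=\{t_{m}:\ 1\leq m\leq m_{0}\}\setminus\{0\}.
\]
This set is finite, since there are at most $m_{0}$ values. Put $\varepsilon=\min\{|s|:\ s\in T\}>0$, with the convention $\varepsilon=+\infty$ if $T=\emptyset$. Now take $t$ with $0<|t|<\varepsilon$. Then $t\neq t_{m}$ for every $m$: either $t_{m}=0\neq t$, or $|t_{m}|\geq\varepsilon>|t|$. Hence $x+tv\notin H_{m}$ for every $m$, so the translated segment is not contained in any $H_{m}$. This gives the claim with the precise meaning of \emph{sufficiently small} being $0<|t|<\varepsilon$, where $\varepsilon$ depends on $x$, $v$ and the family $\mathcal{H}$.

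The argument is short and I expect no real obstacle. The one point needing care is the exclusion $t\neq0$. When $x$ already lies on some $H_{m}$, we get $t_{m}=0$, and then $t=0$ genuinely fails, since the segment itself may lie in $H_{m}$. This is exactly why the statement requires $t\in\mathbb{R}\setminus\{0\}$, and why $0$ is removed from $T$ before taking the minimum.

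Finally, the argument uses only one endpoint. So it proves the stronger fact that, for $0<|t|<\varepsilon$, the translated endpoint $x+tv$ avoids every hyperplane $H_{m}$. The same reasoning applied to the whole segment would give a uniform $\varepsilon$ for all points of $[x,y]$, by compactness and continuity of $z\mapsto\langle a_{m},z\rangle$, should the paper need it later.
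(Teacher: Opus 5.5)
Your proof is correct and follows essentially the paper's own argument. Both test only the endpoint $x+tv$, solve the affine equation for the unique parameter $t_m$, and take $0<|t|$ smaller than the least nonzero $|t_m|$; your convention $\varepsilon=+\infty$ covers the paper's separate case where every $t_m$ vanishes.

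One caution on your closing aside, which the lemma does not need: a uniform $\varepsilon$ valid for all points of $[x,y]$ does not exist in general. If $x$ and $y$ lie strictly on opposite sides of some $H_m$, then every small translate $[x,y]+tv$ still meets $H_m$.
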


\begin{proof}
Let us denote the canonical dot product in $\mathbb{R}^{d}$ by $\left\langle \cdot,\cdot\right\rangle $.
The affine hyper-plane $H_{m}$ in $\mathbb{R}^{d}$ can be defined
by 
\begin{equation}
H_{m}=\{z\in\mathbb{R}^{d}\mid\langle a_{m},z\rangle=b_{m}\}\label{oolpm}
\end{equation}
where $a_{m}\in\mathbb{R}^{d}\setminus\{0\}$ and $b_{m}\in\mathbb{R}$
. The direction space of $H_{m}$, denoted by $\vec{H}_{m}$, is given
by 
\begin{equation}
\vec{H}_{m}=\{w\in\mathbb{R}^{d}\mid\langle a_{m},w\rangle=0\}.\label{rfole}
\end{equation}
By hypothesis, for every $m\in\{1,\dots,N\}$, we have $\langle a_{m},v\rangle\neq0.$
The translate of segment $[x,y]$ by the vector $tv$ is given by
\begin{equation}
[x,y]+tv=\{(1-\lambda)x+\lambda y+tv\mid\lambda\in[0,1]\}\label{oluju}
\end{equation}
The endpoint of $[x,y]+tv$ given by $\lambda=0$ is $x+tv.$ That
endpoint lies in $H_{m}$ \eqref{oolpm} iff 
\begin{align}
 & \langle a_{m},x\rangle+t\langle a_{m},v\rangle=b_{m}\label{eq:left_end}
\end{align}
Since $\langle a_{m},v\rangle\neq0,$ \eqref{eq:left_end}$\iff$
$t=t_{m}\equiv\frac{b_{m}-\langle a_{m},x\rangle}{\langle a_{m},v\rangle}.$
\\
 Let $I$ be the subset of indices where $t_{m}\neq0$. \\
 If $I$ is empty, then $t_{m}=0$ for all $m\in\{1,\dots,m_{0}\}$.
In this case, for any $t\neq0,$the endpoint $x+tv$ does not lie
in $H_{m},$for all $m\in\{1,\dots,m_{0}\}$. \\
 Suppose $I$ is non-empty; let us consider the set of absolute values
$\{|t_{m}|:m\in I\}$. Let $t_{\min}\equiv{\displaystyle \min_{m\in I}}|t_{m}|>0$.
Now, choose any non-zero real number $t$ such that $|t|<t_{\min}$.
\\
 For indices $m$ where $t_{m}=0$, our choice of $t\neq0$ ensures
$t\neq t_{m}$. For indices $m$ in $I$, our choice ensures $|t|<t_{\min}\le|t_{m}|$,
which implies $t\neq t_{m}$.\\
 Since $t\neq t_{m}$ for all $m\in\{1,\dots,m_{0}\}$, the endpoint
$x+tv$ does not belong to any $H_{m}$. \\
 In conclusion, for any $0<|t|<t_{\min},$ the translate $[x,y]+tv$
of segment $[x,y]$ by the vector $tv$ cannot be contained within
any hyperplane $H_{m}$ . 
\end{proof}

\begin{lem}
\label{lempmm}Let $v$ a non-zero vector in $\mathbb{R}^{d}.$ Let
$x,y$ be two points in the interior of K. If the length of v is sufficiently
small, then the translate of segment $[x,y]$ by the vector $v$ is
in the interior of K. 
\end{lem}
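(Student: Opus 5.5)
The plan is to reduce the statement to a single uniform margin around the segment. Since $x,y\in\operatorname{Int}(K)$ and $K$ is convex, the interior $\operatorname{Int}(K)$ is itself convex, so the whole segment $[x,y]$ lies in $\operatorname{Int}(K)$. The segment is compact, being the continuous image of $[0,1]$ under $\lambda\mapsto(1-\lambda)x+\lambda y$. Now define, for $z\in[x,y]$,
\[
\delta(z)\equiv\operatorname{dist}\bigl(z,\R^{d}\setminus\operatorname{Int}(K)\bigr).
\]
This is positive, because $z$ is an interior point. It is also $1$-Lipschitz in $z$, hence continuous. By compactness it attains a minimum $\delta_{0}>0$ on $[x,y]$.

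Alternatively, I would avoid the distance function altogether. Convexity gives a cleaner margin: there are radii $r_{x},r_{y}>0$ with $B(x,r_{x})\subset K$ and $B(y,r_{y})\subset K$. Set $r=\min(r_{x},r_{y})$. Every point $(1-\lambda)x+\lambda y+w$ with $|w|<r$ can be written as
\[
(1-\lambda)(x+w)+\lambda(y+w),
\]
a convex combination of the points $x+w\in B(x,r)$ and $y+w\in B(y,r)$. Hence the open ball $B\bigl((1-\lambda)x+\lambda y,r\bigr)$ lies in $K$, and being open it lies in $\operatorname{Int}(K)$. This version needs no compactness argument at all.

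To conclude, if $|v|<r$ (or $|v|<\delta_{0}$ in the first approach), then each translated point $(1-\lambda)x+\lambda y+v$ belongs to $\operatorname{Int}(K)$. By the description \eqref{oluju} of $[x,y]+v$, the translate lies in $\operatorname{Int}(K)$. The same argument works with $tv$ in place of $v$ for $|t|<r/|v|$, which is the form needed when combining with Lemma \ref{lemosp}.

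The only delicate point is making sure the margin is uniform along the segment rather than pointwise. The convex-combination trick settles this immediately, so I expect no real obstacle.
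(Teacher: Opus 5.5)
Your proof is correct. Your main (second) argument takes a genuinely different route from the paper's.

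The paper uses convexity of $\operatorname{int}(K)$ only to place $[x,y]$ inside $\operatorname{int}(K)$. It then gets a uniform margin by compactness: it covers the segment by balls $B(z,r_z/2)$ with $B(z,r_z)\subset\operatorname{int}(K)$, extracts a finite subcover, and sets $\delta=\frac12\min_i r_i$. This is a Lebesgue-number argument. Your first approach, via the $1$-Lipschitz function $z\mapsto\operatorname{dist}(z,\R^{d}\setminus\operatorname{int}(K))$, is the same compactness idea in another form.

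Your convex-combination argument is different in substance. Writing $(1-\lambda)x+\lambda y+w=(1-\lambda)(x+w)+\lambda(y+w)$ gives the explicit margin $r=\min(r_x,r_y)$ from the two endpoints alone. It needs no compactness, and it does not even need to know beforehand that the segment lies in $\operatorname{int}(K)$; that follows by taking $w=0$.

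The trade-off is as follows. Your argument is shorter and gives an explicit $\delta$, but it relies essentially on the convexity of $K$. The paper's argument works for any compact set contained in an open set, so it would still apply to a non-convex domain, provided the segment is known to lie in its interior.
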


\begin{proof}
Since $\operatorname{int}(K)$ is convex, we have $[x,y]\subset\operatorname{int}(K).$
For each $z\in[x,y]$, there exists $r_{z}>0$ such that $B(z,r_{z})\subset\operatorname{int}(K).$
Hence the family $\left\{ B\left(z,\frac{r_{z}}{2}\right):z\in[x,y]\right\} $
is an open cover of the compact segment $[x,y]$. Therefore, there
exist finitely many points $z_{1},\ldots,z_{P}\in[x,y]$ such that
$[x,y]\subset\bigcup^{P}_{i=1}B\left(z_{i},\frac{r_{i}}{2}\right),$
where $r_{i}=r_{z_{i}}$. Set $\delta=\frac{1}{2}{\displaystyle \min_{1\le i\le P}r_{i}}>0.$
Let $v$ satisfy $\|v\|<\delta$, and let $w\in[x,y]$. Then there
exists $i\in\{1,\ldots,P\}$ such that $\|w-z_{i}\|<\frac{r_{i}}{2}.$
We have 
\[
\|(w+v)-z_{i}\|\le\|w-z_{i}\|+\|v\|<\frac{r_{i}}{2}+\delta\le r_{i}.
\]
Hence, $w+v\in B(z_{i},r_{i})\subset\operatorname{int}(K).$ It follows
that $[x,y]+v\subset\operatorname{int}(K).$ 
\end{proof}

\begin{notation}
\label{wqjjqu}For any bounded real-valued function $f$ with domain
$D_{f}$, we set $\|f\|_{\infty}\equiv{\displaystyle \sup_{x\in D_{f}}}|f(x)|$.
Furthermore, if $D_{f}\subset\mathbb{R}^{d}$ and $f\equiv f(x_{1},\dots,x_{d})$
is bounded along with its first-order partial derivatives ${\displaystyle \frac{\partial f}{\partial x_{\alpha}}}$
(for $\alpha=1,\dots,d$), we define:\\
 $\|f\|_{1}\equiv\max\left\{ \|f\|_{\infty},\,{\displaystyle \max_{1\le\alpha\le d}}\left\Vert \frac{\partial f}{\partial x_{\alpha}}\right\Vert _{\infty}\right\} .$
\end{notation}

\begin{notation}
\label{iqiqjja}Given an integer $p\ge2$ and a vector-valued function
$F=(f_{1},\dots,f_{p})$ whose components are real-valued and bounded,
we equip it with the norm: 
\begin{equation}
\|F\|\doteq{\displaystyle \max_{1\le i\le p}}\|f_{i}\|_{\infty}.\label{oosaz}
\end{equation}
\end{notation}

\begin{notation}
\label{kqqkkd} Let $C\left(K,\R\right)$, the space of continuous
functions from $K$ on $\R,$ be endowed with $\left\Vert \cdot\right\Vert _{\infty}$.
Let $\mathscr{E}_{\mathcal{H}}$ be the subspace of $C\left(K,\R\right)$
consisting of functions $f$ such that $\forall\alpha=1,\cdots,d,$$\dfrac{\partial f}{\partial x_{\alpha}}$
is defined everywhere on $\operatorname{int}(K)\setminus{\displaystyle \cup^{m_{0}}_{m=1}}H_{m}$
and is continuous and bounded.
\end{notation}

\begin{notation}
Let $p\geq1$ be an integer and $C\left(K,\R\right)^{p}$ be equipped
with the norm $\left\Vert \cdot\right\Vert $(\ref{oosaz}). For $F=\left(f_{i}\right)^{p}_{i=1}\in\left(\mathscr{E}_{\mathcal{H}}\right)^{p},$
let 
\begin{align}
\mathscr{N}\left(F\right) & \equiv\max\left\{ \right.\left\Vert F\right\Vert ,{\displaystyle \max_{1\le\alpha\le d}}\left\Vert \dfrac{\partial F}{\partial x_{\alpha}}\right\Vert \left.\right\} \label{bxhdg}\\
 & ={\displaystyle \max_{1\le i\le p}}\|f_{i}\|_{1}.\nonumber 
\end{align}
\end{notation}

\begin{defn}
\label{nnjnjn} For $R>0,$ let us define $\mathscr{M}_{R}\equiv\left\{ \right.F\in\left(\mathscr{E}_{\mathcal{H}}\right)^{p}/\mathscr{N}\left(F\right)\leq R\left.\right\} .$ 
\end{defn}

\begin{prop}
\label{prop::odlp-1} For every $R>0,$ $\mathscr{M}_{R}$ is a non-empty
convex subset of \textup{$C\left(K,\R\right)^{p}$ }and is relatively
compact in $\left(C\left(K,\R\right)^{p},\left\Vert \cdot\right\Vert \right).$ 
\end{prop}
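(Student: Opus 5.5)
Non-emptiness is immediate: the zero map $F\equiv 0$ lies in $\left(\mathscr{E}_{\mathcal{H}}\right)^{p}$ with $\mathscr{N}(0)=0\le R$. Convexity I would obtain directly from the definitions. If $F,G\in\mathscr{M}_{R}$ and $\lambda\in[0,1]$, then $\lambda F+(1-\lambda)G$ is continuous on $K$. Its partial derivatives exist on $\operatorname{int}(K)\setminus\cup_{m}H_{m}$ and equal $\lambda\partial_{\alpha}F+(1-\lambda)\partial_{\alpha}G$, which are continuous and bounded. Hence $\lambda F+(1-\lambda)G\in\left(\mathscr{E}_{\mathcal{H}}\right)^{p}$, and the triangle inequality for the sup norms gives $\mathscr{N}(\lambda F+(1-\lambda)G)\le\lambda\mathscr{N}(F)+(1-\lambda)\mathscr{N}(G)\le R$.

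For relative compactness I will apply the Arzelà--Ascoli theorem componentwise. It suffices to show that $\mathscr{M}_{R}$ is uniformly bounded and equicontinuous in $C(K,\R)^{p}$, since $K$ is compact and the norm (\ref{oosaz}) is the max of sup norms. Uniform boundedness holds because $\|f_{i}\|_{\infty}\le R$. The substance is equicontinuity, and the aim is a Lipschitz bound $|f_{i}(x)-f_{i}(y)|\le C\,R\,\|x-y\|$, with $C$ depending only on $d$, valid for all $x,y\in K$. The difficulty is that $f_{i}$ is differentiable only off the hyperplanes, so the mean value theorem cannot be applied on $[x,y]$ naively.

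To handle this, I first take $x,y\in\operatorname{int}(K)$. I pick a direction $v$ transverse to every $H_{m}$; such a $v$ exists since a finite union of proper subspaces $\vec{H}_{m}$ cannot cover $\R^{d}$. By Lemma \ref{lempmm}, for $|t|$ small the translate $[x,y]+tv$ lies in $\operatorname{int}(K)$. Lemma \ref{lemosp} is not quite enough here: I need the translated segment to avoid lying in any $H_{m}$, i.e.\ to meet each $H_{m}$ in at most one point. By the proof of Lemma \ref{lemosp}, for small $t\neq0$ the endpoint $x+tv\notin H_{m}$, so the segment is not contained in $H_{m}$ and therefore meets it in at most one point. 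Thus $[x,y]+tv$ crosses $\cup_{m}H_{m}$ at finitely many points $s_{1}<\dots<s_{k}$ in the parameter $\lambda$. On each open subinterval, $g(\lambda)=f_{i}((1-\lambda)x+\lambda y+tv)$ is $C^{1}$ with $|g'|\le\sum_{\alpha}|\partial_{\alpha}f_{i}|\,|y_{\alpha}-x_{\alpha}|\le R\sqrt{d}\,\|y-x\|$. Applying the mean value theorem on closed subintervals slightly shrunk inside each piece, then letting the endpoints tend to the $s_{j}$ and using the continuity of $g$, and finally summing, gives $|f_{i}(x+tv)-f_{i}(y+tv)|\le\sqrt{d}\,R\|x-y\|$. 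Letting $t\to0$ and using the continuity of $f_{i}$ yields the bound for interior points.

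Finally, I extend to all of $K$. Since $K$ is convex with non-empty interior, which I assume implicitly as in the paper's setting (otherwise $\operatorname{int}(K)$ is empty and the derivative condition is vacuous), $\overline{\operatorname{int}(K)}=K$. The Lipschitz estimate then passes to $K$ by density and continuity. This gives equicontinuity, and Arzelà--Ascoli gives relative compactness. The main obstacle is the piecewise mean-value argument across the hyperplanes, and in particular the transversality step that guarantees only finitely many crossings.
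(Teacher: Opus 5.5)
Your proposal is correct and follows the paper's own argument. Both use Arzel\`a--Ascoli, with equicontinuity coming from a uniform Lipschitz bound: a piecewise mean value argument along segments that meet the hyperplanes only finitely often, a small transverse translation (Lemmas \ref{lemosp} and \ref{lempmm}) followed by a limit for segments lying inside some $H_m$, and density of $\operatorname{int}(K)$ to reach $\partial K$. The differences are cosmetic. You translate every segment instead of splitting into the paper's two cases, and you spell out details the paper leaves implicit: the constant $\sqrt{d}$, why a transverse $v$ exists, the shrinking of subintervals at the crossing points, and the assumption $\operatorname{int}(K)\neq\emptyset$.
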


\begin{proof}
The null function is in $\mathscr{M}_{R}.$ The convexity of $\mathscr{M}_{R}$
is immediate from its definition. \\
 1) Let $x\in K$ be fixed. $\forall F\in\mathscr{M}_{R}$ we have
$\left\Vert F\left(x\right)\right\Vert _{\R^{p}}\leq\left\Vert F\right\Vert \leq\mathscr{N}\left(F\right)\leq R$
(with $\left\Vert \left(y_{j}\right)^{p}_{j=1}\right\Vert _{\R^{p}}\equiv{\displaystyle \max_{j}}\left|y_{j}\right|$).
Thus $\left\{ F\left(x\right)/F\in\mathscr{M}_{R}\right\} $ is bounded
in $\R^{p},$ hence it is relatively compact in $\R^{p}.$ \\
 2) Let us prove that $\mathscr{M}_{R}$ is equicontinuous.\\
 We have $\forall F=\left(f_{i}\right)^{p}_{i=1}\in\mathscr{M}_{R},$$\forall i=1,\cdots,p,\forall\alpha=1,\cdots,d,$
$\left\Vert \dfrac{\partial f_{i}}{\partial x_{\alpha}}\right\Vert _{\infty}\leq\left\Vert \dfrac{\partial F}{\partial x_{\alpha}}\right\Vert \leq\mathscr{N}\left(F\right)\leq R.$
Thus for all $x\in\operatorname{int}(K)\setminus{\displaystyle \cup^{m_{0}}_{m=1}}H_{m},$
we have $\forall i,\left\Vert \dfrac{\partial f_{i}}{\partial x_{\alpha}}\left(x\right)\right\Vert _{\infty}\leq R.$\\
 Let $F\in\mathscr{M}_{R}$ be fixed. \\
 Let $x,y\in\operatorname{int}(K).$ Assume that the segment $[x,y]$
is not contained in any hyperplane of $\mathcal{H}$. A line intersects
an affine hyperplane in at most one point unless it is contained in
that hyperplane. Thus $[x,y]\cap\left({\displaystyle \cup^{m_{0}}_{m=1}}H_{m}\right)$
is finite. Hence there exist points $x=z_{0},z_{1},\ldots,z_{r}=y,$
ordered along the segment, such that each open subsegment $\left]z_{k},z_{k+1}\right[$
is contained in one connected component of $\ensuremath{\mathring{K}\setminus{\displaystyle \cup^{m_{0}}_{m=1}}H_{m}}.$
It follows from the mean value theorem that there exist a constant
$\mu>0$ such that $\left\Vert F(z_{k+1})-F(z_{k})\right\Vert \le\mu R\left\Vert z_{k+1}-z_{k}\right\Vert _{\mathbb{R}^{d}},\qquad k=0,\ldots,r-1.$
By continuity of $F$, the above inequality also holds when one endpoint
belongs to any of the hyperplanes. Summing over $k$,\\
 $\left\Vert F(x)-F(y)\right\Vert \le\mu R\sum^{r-1}_{k=0}\left\Vert z_{k+1}-z_{k}\right\Vert =\mu R\left\Vert x-y\right\Vert $
as $z_{k}\in\left[z_{k-1},z_{k+1}\right],$$k=1,\cdots,r-1.$ \\
 Assume now that $[x,y]\subset H_{l}$ for some $H_{l}\in\mathcal{H}$.
Let $v\in\mathbb{R}^{d}$ be a non-zero vector that is not in the
direction space of any hyper-plane $H_{m}.$ From lemmas \eqref{lemosp}
and \eqref{lempmm}, there exist $t_{min},\delta>0$ such that $\forall t\in\mathbb{R}\setminus\left\{ 0\right\} ,$
if $\left|t\right|<t_{min}$ and $\left\Vert tv\right\Vert <\delta,$
then the translate of segment $[x,y]$ by the vector $tv$ is in the
interior of $K$ and cannot be contained in any of hyper-planes $H_{m}$.
Take $t\in\mathbb{R}\setminus\left\{ 0\right\} $ such $\left|t\right|<t_{min}$
and $\left\Vert tv\right\Vert <\delta.$ Then for all $n\in\mathbb{N},n\geq1,$we
have $\left|{\displaystyle \frac{t}{n}}\right|<t_{min}$ and $\left\Vert {\displaystyle \frac{t}{n}}v\right\Vert <\delta.$Thus
the translate of segment $[x,y]$ by the vector ${\displaystyle \frac{t}{n}}v$
is in the interior of $K$ and cannot be contained in any of hyper-planes
$H_{m}$. \\
 Applying the previous case, we have for all $n\geq1,$$\left\Vert F(x+{\displaystyle \frac{t}{n}}v)-F(y+{\displaystyle \frac{t}{n}}v)\right\Vert \le\mu R\left\Vert x-y\right\Vert .$\\
 Passing to the limit as $n\to\infty$ and using the continuity of
$F$, we have $\left\Vert F(x)-F(y)\right\Vert \le\mu R\left\Vert x-y\right\Vert .$\\
 Therefore, $\left\Vert F(x)-F(y)\right\Vert \le\mu R\left\Vert x-y\right\Vert $
for all $x,y\in\operatorname{int}(K).$\\
 Now, let $x,y\in\partial K$. As $K$ is convex there exist sequences
$x_{n},y_{n}\in\operatorname{int}(K)$ satisfying $x_{n}\to x,y_{n}\to y.$
Applying the previous estimate, we have for all $n\geq1,$ $\left\Vert F(x_{n})-F(y_{n})\right\Vert \le\mu R\left\Vert x_{n}-y_{n}\right\Vert .$
Passing to the limit yields $\left\Vert F(x)-F(y)\right\Vert \le\mu R\left\Vert x-y\right\Vert .$
\\
 Finally, we conclude that for all $x\in K,$ 
\begin{align}
\forall\varepsilon & >0,\exists\eta>0,\forall F\in\mathscr{M}_{R},\forall y\in K:\left\Vert x-y\right\Vert <\eta\implies\left\Vert F\left(x\right)-F\left(y\right)\right\Vert \leq\varepsilon.
\end{align}
Thus $\mathscr{M}_{R}$ is equicontinuous in $C\left(K,\R\right)^{p}.$

By the Arzelà-Ascoli's theorem, $\mathscr{M}_{R}$ is relatively compact
in $\left(C\left(K,\R\right)^{p},\left\Vert \cdot\right\Vert \right)$
. 
\end{proof}

\begin{cor}
\label{prop::odlp-1-1} For every $R>0$, the subset of non-negative
elements of $\mathscr{M}_{R}$, defined by $\mathscr{M}^{+}_{R}\equiv\left\{ F\in\mathscr{M}_{R}/F\geq0\right\} $
is a non-empty, convex, and relatively compact subset of the normed
space $\left(C(K,\mathbb{R})^{p},\left\Vert \cdot\right\Vert \right)$. 
\end{cor}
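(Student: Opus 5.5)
The plan is to deduce the corollary directly from Proposition \ref{prop::odlp-1}, handling the three properties separately.

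\emph{Non-emptiness.} The null function $F\equiv0$ satisfies $\mathscr{N}(0)=0\le R$ and $0\geq0$. Hence $0\in\mathscr{M}^{+}_{R}$.

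\emph{Convexity.} Let $F,G\in\mathscr{M}^{+}_{R}$ and $\lambda\in[0,1]$. Proposition \ref{prop::odlp-1} gives $\lambda F+(1-\lambda)G\in\mathscr{M}_{R}$. Each component is a combination of non-negative functions with non-negative coefficients, so it is pointwise non-negative on $K$. Therefore the convex combination lies in $\mathscr{M}^{+}_{R}$.

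\emph{Relative compactness.} Since $\mathscr{M}^{+}_{R}\subset\mathscr{M}_{R}$, we have $\overline{\mathscr{M}^{+}_{R}}\subset\overline{\mathscr{M}_{R}}$, with closures taken in $\left(C(K,\mathbb{R})^{p},\left\Vert \cdot\right\Vert \right)$. The right-hand side is compact by Proposition \ref{prop::odlp-1}. A closed subset of a compact set in a metric space is compact, so $\overline{\mathscr{M}^{+}_{R}}$ is compact.

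For later use in a fixed-point argument, I would also note that $\mathscr{M}^{+}_{R}=\mathscr{M}_{R}\cap\mathscr{P}$, where
\[
\mathscr{P}\equiv\left\{ F\in C(K,\mathbb{R})^{p}/F\geq0\right\} .
\]
The cone $\mathscr{P}$ is closed, being the intersection of the closed sets $\{F:f_{i}(x)\geq0\}$ over $x\in K$ and $i=1,\dots,p$. Consequently any uniform limit of elements of $\mathscr{M}^{+}_{R}$ stays non-negative. None of these steps presents a real obstacle; the only point needing care is that relative compactness passes to subsets, which is immediate from the inclusion of closures.
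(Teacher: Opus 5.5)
Your proof is correct and follows the same route as the paper: $0\in\mathscr{M}^{+}_{R}$ gives non-emptiness, convexity comes from intersecting the convex set $\mathscr{M}_{R}$ with the convex set of non-negative functions, and relative compactness is inherited from $\mathscr{M}_{R}$ via Proposition~\ref{prop::odlp-1}. Your extra remark that the non-negative cone is closed is not needed for the statement itself, but it is a correct and useful addition.
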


\begin{proof}
This follows immediately from the fact that $\mathscr{M}^{+}_{R}$
is the intersection of two convex subsets containing the zero function,
and is contained within the relatively compact subset $\mathscr{M}_{R}$
of $\left(C(K,\mathbb{R})^{p},\left\Vert \cdot\right\Vert \right)$. 
\end{proof}

In the sequel, $p=4$ and $d=3.$

\section{Mixed problem and main results}\label{sec:Mixed-problem-and}

\subsection{Mixed problem}

In this work, we consider the general four-velocity Broadwell model,
whose dynamics are governed by the following kinetic equations \eqref{b2ksoauau}:
\begin{equation}
\begin{cases}
{\displaystyle \frac{\partial N_{1}}{\partial t}+c\cos\theta\frac{\partial N_{1}}{\partial x}+c\sin\theta\frac{\partial N_{1}}{\partial y}=Q\left(N\right)}\\
{\displaystyle \frac{\partial N_{2}}{\partial t}-c\sin\theta\frac{\partial N_{2}}{\partial x}+c\cos\theta\frac{\partial N_{2}}{\partial y}=-Q\left(N\right)}\\
{\displaystyle \frac{\partial N_{3}}{\partial t}+c\sin\theta\frac{\partial N_{3}}{\partial x}-c\cos\theta\frac{\partial N_{3}}{\partial y}=-Q\left(N\right)}\\
{\displaystyle \frac{\partial N_{4}}{\partial t}-c\cos\theta\frac{\partial N_{4}}{\partial x}-c\sin\theta\frac{\partial N_{4}}{\partial y}=Q\left(N\right)}
\end{cases}\label{b2ksoauau}
\end{equation}

\begin{equation}
Q\left(N\right)=2cS\left(N_{2}N_{3}-N_{1}N_{4}\right).\label{eq:solls}
\end{equation}
for $\theta\in\left]0;\dfrac{\pi}{2}\right[$ where the unknowns $N_{1},N_{2},N_{3}$
and $N_{4}$ are real-valued functions of $(t,x,y)\in\R^{3}.$ 
\begin{defn}
In the sequel, $\mathcal{P}$ denotes the initial-boundary value problem
consisting of system \eqref{b2ksoauau} posed in $\left[0;+\infty\right[\times\left[a_{1},b_{1}\right]\times\left[a_{2},b_{2}\right]\subset\mathbb{R}^{3}$,
subject to the initial and boundary conditions \eqref{eq:lsos-2}-\eqref{eq:oiiap-1}: 
\end{defn}

\selectlanguage{french}%
\begin{align}
N_{i}\left(0,x,y\right)= & N^{0}_{i}\left(x,y\right),\;\left(x,y\right)\in\left[a_{1};b_{1}\right]\times\left[a_{2};b_{2}\right],i=1,\cdots,4\label{eq:lsos-2}\\
N_{1}\left(t,a_{1},y\right)= & N^{-}_{1}\left(t,y\right),\;\left(t,y\right)\in\left[0;+\infty\right[\times\left[a_{2};b_{2}\right]\label{eq:losso-2}\\
N_{1}\left(t,x,a_{2}\right)= & N^{--}_{1}\left(t,x\right),\;\left(t,x\right)\in\left[0;+\infty\right[\times\left[a_{1};b_{1}\right]\label{eq:mppos-1}\\
N_{2}\left(t,b_{1},y\right)= & N^{+}_{2}\left(t,y\right),\;\left(t,y\right)\in\left[0;+\infty\right[\times\left[a_{2};b_{2}\right]\label{eq:mpsss-1}\\
N_{2}\left(t,x,a_{2}\right)= & N^{--}_{2}\left(t,x\right),\;\left(t,x\right)\in\left[0;+\infty\right[\times\left[a_{1};b_{1}\right]\label{eq:lsoo-1-1}\\
N_{3}\left(t,a_{1},y\right)= & N^{-}_{3}\left(t,y\right),\;\left(t,y\right)\in\left[0;+\infty\right[\times\left[a_{2};b_{2}\right]\label{ikis-1}\\
N_{3}\left(t,x,b_{2}\right)= & N^{++}_{3}\left(t,x\right),\;\left(t,x\right)\in\left[0;+\infty\right[\times\left[a_{1};b_{1}\right]\label{eq:ikioi-3}\\
N_{4}\left(t,b_{1},y\right)= & N^{+}_{4}\left(t,y\right),\;\left(t,y\right)\in\left[0;+\infty\right[\times\left[a_{2};b_{2}\right]\label{eq:ikioi-1-2}\\
N_{4}\left(t,x,b_{2}\right)= & N^{++}_{4}\left(t,x\right),\;\left(t,x\right)\in\left[0;+\infty\right[\times\left[a_{1};b_{1}\right]\label{eq:oiiap-1}
\end{align}

\selectlanguage{english}%
The prescribed initial and boundary data are assumed to enjoy sufficient
regularity and consistency properties: they are non negative, continuously
differentiable with bounded first-order partial derivatives, and satisfy
the compatibility conditions \eqref{eq:kqiiq-3-1}-\eqref{eq:mqppa-1}\foreignlanguage{french}{
\begin{align}
N^{0}_{1}\left(a_{1},y\right) & =N^{-}_{1}\left(0,y\right),\;y\in\left[a_{2};b_{2}\right]\label{eq:kqiiq-3-1}\\
N^{0}_{1}\left(x,a_{2}\right) & =N^{--}_{1}\left(0,x\right),\;x\in\left[a_{1};b_{1}\right]\label{eq:ksos-1}\\
N^{-}_{1}\left(t,a_{2}\right) & =N^{--}_{1}\left(t,a_{1}\right),\;t\in\left[0;+\infty\right[\label{eq:slos-1-1-1}
\end{align}
\begin{align}
N^{0}_{2}\left(b_{1},y\right) & =N^{+}_{2}\left(0,y\right),\;y\in\left[a_{2};b_{2}\right]\label{eq:qmpp-1}\\
N^{0}_{2}\left(x,a_{2}\right) & =N^{--}_{2}\left(0,x\right),\;x\in\left[a_{1};b_{1}\right]\label{eq:lqoop-2}\\
N^{+}_{2}\left(t,a_{2}\right) & =N^{--}_{2}\left(t,b_{1}\right),\;t\in\left[0;+\infty\right[\label{eq:mqpp-2}
\end{align}
\begin{align}
N^{0}_{3}\left(a_{1},y\right) & =N^{-}_{3}\left(0,y\right),\;y\in\left[a_{2};b_{2}\right]\label{eq:lopq-1}\\
N^{0}_{3}\left(x,b_{2}\right) & =N^{++}_{3}\left(0,x\right),\;x\in\left[a_{1};b_{1}\right]\label{eq:loppq-1-2}\\
N^{-}_{3}\left(t,b_{2}\right) & =N^{++}_{3}\left(t,a_{1}\right),\;t\in\left[0;+\infty\right[\label{eq:looqp-2}
\end{align}
}

\selectlanguage{french}%
\begin{align}
N^{0}_{4}\left(b_{1},y\right) & =N^{+}_{4}\left(0,y\right),\;y\in\left[a_{2};b_{2}\right]\label{eq:looqp-1-2}\\
N^{0}_{4}\left(x,b_{2}\right) & =N^{++}_{4}\left(0,x\right),\;x\in\left[a_{1};b_{1}\right]\label{eq:mqpp-1-1}\\
N^{+}_{4}\left(t,b_{2}\right) & =N^{++}_{4}\left(t,b_{1}\right),\;t\in\left[0;+\infty\right[\label{eq:mqppa-1}
\end{align}

\selectlanguage{english}%

\begin{defn}
Now given $\left[\tau;\tau'\right]\subset\R_{+}$, let $K_{\tau,\tau'}\equiv\left[\tau;\tau'\right]\times\left[a_{1};b_{1}\right]\times\left[a_{2};b_{2}\right].$
The problem $\mathcal{P}_{\tau,\tau'}$ is defined as the system \eqref{eq:erter-1}-\eqref{eq:zzzfra-1}
subject to the initial and boundary conditions \eqref{eq:lsos}-\eqref{eq:oiiap}: 
\end{defn}

\begin{align}
{\textstyle \dfrac{\p N_{1}}{\p t}+c\cos\theta\dfrac{\p N_{1}}{\p x}} & {\displaystyle +c\sin\theta\dfrac{\p N_{1}}{\p y}}=Q(N),\;\left(t,x,y\right)\in\mathring{K_{\tau,\tau'}}\label{eq:erter-1}\\
\dfrac{\p N_{2}}{\p t}-c\sin\theta\dfrac{\p N_{2}}{\p x} & +c\cos\theta\dfrac{\p N_{2}}{\p y}=-Q(N),\;\left(t,x,y\right)\in\mathring{K_{\tau,\tau'}}\\
\dfrac{\p N_{3}}{\p t}+c\sin\theta\dfrac{\p N_{3}}{\p x} & -c\cos\theta\dfrac{\p N_{3}}{\p y}=-Q(N),\;\left(t,x,y\right)\in\mathring{K_{\tau,\tau'}}\\
\dfrac{\p N_{4}}{\p t}-c\cos\theta\dfrac{\p N_{4}}{\p x} & -c\sin\theta\dfrac{\p N_{4}}{\p y}=Q(N),\;\left(t,x,y\right)\in\mathring{K_{\tau,\tau'}}\label{eq:zzzfra-1}\\
N_{i}\left(\tau,x,y\right)= & N^{\tau}_{i}\left(x,y\right),\;\left(x,y\right)\in\left[a_{1};b_{1}\right]\times\left[a_{2};b_{2}\right],i=1,\cdots,4\label{eq:lsos}\\
N_{1}\left(t,a_{1},y\right)= & N^{-}_{1}\left(t,y\right),\;\left(t,y\right)\in\left[\tau;\tau'\right]\times\left[a_{2};b_{2}\right]\label{eq:losso}\\
N_{1}\left(t,x,a_{2}\right)= & N^{--}_{1}\left(t,x\right),\;\left(t,x\right)\in\left[\tau;\tau'\right]\times\left[a_{1};b_{1}\right]\label{eq:mppos}\\
N_{2}\left(t,b_{1},y\right)= & N^{+}_{2}\left(t,y\right),\;\left(t,y\right)\in\left[\tau;\tau'\right]\times\left[a_{2};b_{2}\right]\label{eq:mpsss}\\
N_{2}\left(t,x,a_{2}\right)= & N^{--}_{2}\left(t,x\right),\;\left(t,x\right)\in\left[\tau;\tau'\right]\times\left[a_{1};b_{1}\right]\label{eq:lsoo-1}\\
N_{3}\left(t,a_{1},y\right)= & N^{-}_{3}\left(t,y\right),\;\left(t,y\right)\in\left[\tau;\tau'\right]\times\left[a_{2};b_{2}\right]\label{ikis}\\
N_{3}\left(t,x,b_{2}\right)= & N^{++}_{3}\left(t,x\right),\;\left(t,x\right)\in\left[\tau;\tau'\right]\times\left[a_{1};b_{1}\right]\label{eq:ikioi}\\
N_{4}\left(t,b_{1},y\right)= & N^{+}_{4}\left(t,y\right),\;\left(t,y\right)\in\left[\tau;\tau'\right]\times\left[a_{2};b_{2}\right]\label{eq:ikioi-1}\\
N_{4}\left(t,x,b_{2}\right)= & N^{++}_{4}\left(t,x\right),\;\left(t,x\right)\in\left[\tau;\tau'\right]\times\left[a_{1};b_{1}\right]\label{eq:oiiap}
\end{align}

The data $N^{\tau}_{i},\left(i=1,2,3,4\right),$ $N^{-}_{1},N^{--}_{1},$$N^{+}_{2},N^{--}_{2},$$N^{-}_{3},N^{++}_{3},$$N^{+}_{4},N^{++}_{4}$
are assumed to be non-negative, $C^{1}$-regular with bounded derivatives,
and subject to the compatibility conditions \eqref{eq:kqiiq-3}-\eqref{eq:mqppa}:
\begin{align}
N^{\tau}_{1}\left(a_{1},y\right) & =N^{-}_{1}\left(\tau,y\right),\;y\in\left[a_{2};b_{2}\right]\label{eq:kqiiq-3}\\
N^{\tau}_{1}\left(x,a_{2}\right) & =N^{--}_{1}\left(\tau,x\right),\;x\in\left[a_{1};b_{1}\right]\label{eq:ksos}\\
N^{-}_{1}\left(t,a_{2}\right) & =N^{--}_{1}\left(t,a_{1}\right),\;t\in\left[\tau;\tau'\right]\label{eq:slos-1-1}
\end{align}
\begin{align}
N^{\tau}_{2}\left(b_{1},y\right) & =N^{+}_{2}\left(\tau,y\right),\;y\in\left[a_{2};b_{2}\right]\label{eq:qmpp}\\
N^{\tau}_{2}\left(x,a_{2}\right) & =N^{--}_{2}\left(\tau,x\right),\;x\in\left[a_{1};b_{1}\right]\label{eq:lqoop}\\
N^{+}_{2}\left(t,a_{2}\right) & =N^{--}_{2}\left(t,b_{1}\right),\;t\in\left[\tau;\tau'\right]\label{eq:mqpp}
\end{align}
\begin{align}
N^{\tau}_{3}\left(a_{1},y\right) & =N^{-}_{3}\left(\tau,y\right),\;y\in\left[a_{2};b_{2}\right]\label{eq:lopq}\\
N^{\tau}_{3}\left(x,b_{2}\right) & =N^{++}_{3}\left(\tau,x\right),\;x\in\left[a_{1};b_{1}\right]\label{eq:loppq-1}\\
N^{-}_{3}\left(t,b_{2}\right) & =N^{++}_{3}\left(t,a_{1}\right),\;t\in\left[\tau;\tau'\right]\label{eq:looqp}
\end{align}

\begin{align}
N^{\tau}_{4}\left(b_{1},y\right) & =N^{+}_{4}\left(\tau,y\right),\;y\in\left[a_{2};b_{2}\right]\label{eq:looqp-1}\\
N^{\tau}_{4}\left(x,b_{2}\right) & =N^{++}_{4}\left(\tau,x\right),\;x\in\left[a_{1};b_{1}\right]\label{eq:mqpp-1}\\
N^{+}_{4}\left(t,b_{2}\right) & =N^{++}_{4}\left(t,b_{1}\right),\;t\in\left[\tau;\tau'\right]\label{eq:mqppa}
\end{align}

\begin{notation}
\label{jskskk}We introduce the following functions 
\begin{align*}
\overline{N^{\tau}_{1}}\left(t,x,y\right) & \equiv N^{\tau}_{1}\left(x-c\left(t-\tau\right)\cos\theta,y-c\left(t-\tau\right)\sin\theta\right)\\
\overline{N^{-}_{1}}\left(t,x,y\right) & \equiv N^{-}_{1}\left(t-\frac{1}{c\cos\theta}x+\frac{a_{1}}{c\cos\theta},-\frac{\sin\theta}{\cos\theta}x+y+\frac{\sin\theta}{\cos\theta}a_{1}\right)\\
\overline{N^{--}_{1}}\left(t,x,y\right) & \equiv N^{--}_{1}\left(t-\frac{1}{c\sin\theta}y+\frac{a_{2}}{c\sin\theta},x-\frac{\cos\theta}{\sin\theta}y+\frac{\cos\theta}{\sin\theta}a_{2}\right)
\end{align*}
\begin{align*}
\overline{N^{\tau}_{2}}\left(t,x,y\right) & \equiv N^{\tau}_{2}\left(x+c\left(t-\tau\right)\sin\theta,y-c\left(t-\tau\right)\cos\theta\right)\\
\overline{N^{+}_{2}}\left(t,x,y\right) & \equiv N^{+}_{2}\left(t+\frac{1}{c\sin\theta}x-\frac{b_{1}}{c\sin\theta},\frac{\cos\theta}{\sin\theta}x+y-\frac{\cos\theta}{\sin\theta}b_{1}\right)\\
\overline{N^{--}_{2}}\left(t,x,y\right) & \equiv N^{--}_{2}\left(t-\frac{1}{c\cos\theta}y+\frac{a_{2}}{c\cos\theta},x+\frac{\sin\theta}{\cos\theta}y-\frac{\sin\theta}{\cos\theta}a_{2}\right)
\end{align*}
\begin{align*}
\overline{N^{\tau}_{3}}\left(t,x,y\right) & \equiv N^{\tau}_{3}\left(x-c\left(t-\tau\right)\sin\theta,y+c\left(t-\tau\right)\cos\theta\right)\\
\overline{N^{-}_{3}}\left(t,x,y\right) & \equiv N^{-}_{3}\left(t-\frac{1}{c\sin\theta}x+\frac{a_{1}}{c\sin\theta},\frac{\cos\theta}{\sin\theta}x+y-\frac{\cos\theta}{\sin\theta}a_{1}\right)\\
\overline{N^{++}_{3}}\left(t,x,y\right) & \equiv N^{++}_{3}\left(t+\frac{1}{c\cos\theta}y-\frac{b_{2}}{c\cos\theta},x+\frac{\sin\theta}{\cos\theta}y-\frac{\sin\theta}{\cos\theta}b_{2}\right)
\end{align*}
\begin{align*}
\overline{N^{\tau}_{4}}\left(t,x,y\right) & \equiv N^{\tau}_{4}\left(x+c\left(t-\tau\right)\cos\theta,y+c\left(t-\tau\right)\sin\theta\right)\\
\overline{N^{+}_{4}}\left(t,x,y\right) & \equiv N^{+}_{4}\left(t+\frac{1}{c\cos\theta}x-\frac{b_{1}}{c\cos\theta},-\frac{\sin\theta}{\cos\theta}x+y+\frac{\sin\theta}{\cos\theta}b_{1}\right)\\
\overline{N^{++}_{4}}\left(t,x,y\right) & \equiv N^{++}_{4}\left(t+\frac{1}{c\sin\theta}y-\frac{b_{2}}{c\sin\theta},x-\frac{\cos\theta}{\sin\theta}y+\frac{\cos\theta}{\sin\theta}b_{2}\right)
\end{align*}
\end{notation}

\begin{notation}
\label{gjjgkk} Let us introduce the constant $\mathfrak{q}$ which
is associated to the problem$\mathcal{P}_{\tau,\tau'}:$ 
\begin{multline}
\mathfrak{q}\equiv\max_{1\leq i\leq4}\Biggl\{\left\Vert \overline{N^{\tau}_{i}}\right\Vert _{1},\left\Vert \overline{N^{-}_{1}}\right\Vert _{1},\left\Vert \overline{N^{--}_{1}}\right\Vert _{1},\left\Vert \overline{N^{+}_{2}}\right\Vert _{1},\\
\left\Vert \overline{N^{--}_{2}}\right\Vert _{1},\left\Vert \overline{N^{-}_{3}}\right\Vert _{1},\left\Vert \overline{N^{++}_{3}}\right\Vert _{1},\left\Vert \overline{N^{+}_{4}}\right\Vert _{1},\left\Vert \overline{N^{++}_{4}}\right\Vert _{1}\Biggr\}.\label{eq:loso-1-1-2}
\end{multline}
\end{notation}

\begin{notation}
\label{rrrrt}Let us introduce the constants $\mu$, $\lambda$, and
$\delta$ associated with the discrete kinetic model: 
\begin{align}
\mu & \equiv8+\frac{4}{c\cos\theta}+\frac{4}{c\sin\theta}+8c\cos\theta+8c\sin\theta,\\
\lambda & \equiv32+\frac{16}{c\cos\theta}+\frac{16}{c\sin\theta}+16c\cos\theta+16c\sin\theta,\\
\delta & \equiv8+\frac{4}{c\cos\theta}+\frac{4}{c\sin\theta}+4c\cos\theta+4c\sin\theta.
\end{align}
Let $R_{0}>0$ be fixed. Furthermore, we define $\mathfrak{p}_{\sigma}$
and $\mathfrak{q}_{\sigma}$ as follows: 
\begin{equation}
\mathfrak{p}_{\sigma}\equiv\left(\mu+\lambda\sigma R_{0}(\tau'-\tau)\right)(\sigma+cS)\label{ooodl}
\end{equation}
and 
\begin{equation}
\mathfrak{q}_{\sigma}\equiv\mathfrak{q}(1+\delta\sigma R_{0}).\label{ppmmzp}
\end{equation}
\end{notation}

\begin{notation}
Let us introduce the parameter 
\begin{equation}
\gamma\equiv1+c\cos\theta+c\sin\theta+{\displaystyle \frac{1}{c\cos\theta}+\frac{\sin\theta}{\cos\theta}+\frac{1}{c\sin\theta}+\frac{\cos\theta}{\sin\theta}}.\label{xwccv}
\end{equation}
Put 
\begin{equation}
f_{\sigma}\left(R_{0}\right)\equiv\dfrac{1}{4\mu\left(1+\delta\sigma R_{0}\right)\left(\sigma+cS\right)}\label{eq:loi-1}
\end{equation}
and 
\begin{align}
g\left(\mathfrak{q}\right) & \equiv\min\left\{ 1;\dfrac{1}{\lambda\sigma R_{0}}\left(\dfrac{1}{4\mathfrak{q}\left(1+\delta\sigma R_{0}\right)}-\mu\right)\right\} \nonumber \\
 & =\min\left\{ 1;\dfrac{\mu}{\lambda\sigma R_{0}}\left(\right.\dfrac{f_{\sigma}\left(R_{0}\right)}{\mathfrak{q}}-1\left.\right)\right\} \label{kqiqiiq}
\end{align}
\end{notation}

\begin{notation}
We introduce the function 
\begin{align}
\varPsi\left(\sigma\right) & \equiv\dfrac{{\textstyle -1+\sqrt{1+\frac{\delta\sigma}{\mu\left(\sigma+cS\right)\gamma}}}}{2\delta\sigma}\label{sskks}\\
 & =\dfrac{{\textstyle -1+\sqrt{1+\frac{\left(8+\frac{4}{c\cos\theta}+\frac{4}{c\sin\theta}+4c\cos\theta+4c\sin\theta\right)\sigma}{\left(8+\frac{4}{c\cos\theta}+\frac{4}{c\sin\theta}+8c\cos\theta+8c\sin\theta\right)\left(\sigma+cS\right)\left(1+c\cos\theta+c\sin\theta+\frac{1}{c\cos\theta}+\frac{\sin\theta}{\cos\theta}+\frac{1}{c\sin\theta}+\frac{\cos\theta}{\sin\theta}\right)}}}}{2\left(8+\frac{4}{c\cos\theta}+\frac{4}{c\sin\theta}+4c\cos\theta+4c\sin\theta\right)\sigma}
\end{align}
\end{notation}

In the section~\ref{ozppappa}, let us consider $K=K_{\tau,\tau'}\subset\mathbb{R}^{3}$
with coordinates $x_{1}\equiv t$, $x_{2}\equiv x$, and $x_{3}\equiv y$.
Let $\mathcal{H}=\{H_{m}\}^{12}_{m=1}$ be the family of planes defined
by: {\small
\begin{align}
H_{1} & :x-c(t-\tau)\cos\theta=a_{1}, & H_{2} & :y-c(t-\tau)\sin\theta=a_{2}, & H_{3} & :x\sin\theta-y\cos\theta=a_{1}\sin\theta-a_{2}\cos\theta,\label{qqqq}\\
H_{4} & :x+c(t-\tau)\sin\theta=b_{1}, & H_{5} & :y-c(t-\tau)\cos\theta=a_{2}, & H_{6} & :x\cos\theta+y\sin\theta=b_{1}\cos\theta+a_{2}\sin\theta,\\
H_{7} & :x-c(t-\tau)\sin\theta=a_{1}, & H_{8} & :y+c(t-\tau)\cos\theta=b_{2}, & H_{9} & :x\cos\theta+y\sin\theta=a_{1}\cos\theta+b_{2}\sin\theta,\\
H_{10} & :x+c(t-\tau)\cos\theta=b_{1}, & H_{11} & :y+c(t-\tau)\sin\theta=b_{2}, & H_{12} & :x\sin\theta-y\cos\theta=b_{1}\sin\theta-b_{2}\cos\theta.\label{zzzzz}
\end{align}
}Finally, we consider the space $\mathscr{E}_{\mathcal{H}}$ as defined
in the notation~ \ref{kqqkkd}.

\subsection{Main results}

We prove that the analogous of the theorems 2.1, 2.2 and 2.3 in the
paper \cite{sob almeida 2026 arxiv 1} for the four velocity Broadwell
model also hold for the general Broadwell four velocity model; namely 
\begin{thm}
\label{thm:Suppose-.-Then-1-1-1}For sufficiently large $\sigma$,
suppose that $\mathfrak{q}<f_{\sigma}\left(R_{0}\right)$ and $\tau'-\tau\leq g\left(\mathfrak{q}\right)$.
Then we have $\mathfrak{p}_{\sigma}\mathfrak{q}_{\sigma}\leq\dfrac{1}{4}$
and the problem $\mathcal{P}_{\tau,\tau'}$ possesses a unique non-negative
solution $N=\left(N_{i}\right)^{4}_{i=1}\in C\left(K_{\tau,\tau'};\R\right)^{4}$
such that the derivatives $\dfrac{\partial N_{i}}{\partial t},\dfrac{\partial N_{i}}{\partial x},\dfrac{\partial N_{i}}{\partial y},$
(for $i=1,2,3,4$) are defined everywhere on $\operatorname{int}(K_{\tau,\tau'})\setminus\bigcup^{12}_{m=1}H_{m}$,
where they are continuous and bounded and verify 
\begin{equation}
{\displaystyle \max_{1\leq i\leq4}\|N_{i}\|_{1}}  \leq\dfrac{1-\sqrt{1-4\mathfrak{p}_{\sigma}\mathfrak{q}_{\sigma}}}{2\mathfrak{p}_{\sigma}}.\label{eq:loosqz-1-1-1}
\end{equation}
In particular, if $R_{0}<\varPsi\left(\sigma\right)$ with $\mathfrak{q}<f_{\sigma}(R_{0})$
and $\tau'-\tau=g(\mathfrak{q})$ then \eqref{eq:loosqz-1-1-1} implies
that 
\begin{equation}
\gamma\cdot\max_{1\leq i\leq4}\|N_{i}\|_{1}<f_{\sigma}(R_{0}).
\end{equation}
\end{thm}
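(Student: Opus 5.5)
The plan is to recast $\mathcal{P}_{\tau,\tau'}$ as a fixed-point problem $N=\mathcal{T}N$ on $\mathscr{M}^{+}_{R}$ and apply the Schauder fixed-point theorem, using Proposition \ref{prop::odlp-1} and Corollary \ref{prop::odlp-1-1}.

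\textbf{Step 1 (integral operator).} Integrate each equation along its characteristic $s\mapsto(s,x-c(t-s)\cos\theta,\dots)$ backwards from $(t,x,y)$. The characteristic first hits either the initial plane $t=\tau$ or one of the two inflow faces of component $i$. Which face is hit first is decided by the planes $H_{1},\dots,H_{12}$. This splits $K_{\tau,\tau'}$ into at most three regions per component. On each region, $N_i$ equals the corresponding datum $\overline{N^{\tau}_{i}}$, $\overline{N^{\pm}_{i}}$ or $\overline{N^{\pm\pm}_{i}}$ of Notation \ref{jskskk}, plus $\pm\int Q(N)$ along the characteristic.

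To obtain non-negativity and a contraction-friendly structure, I would use the standard $\sigma$-trick. Write the equations as $D_iN_i+\sigma N_i=\sigma N_i\pm Q(N)$. For $N\geq0$ with $\|N\|\le R_0$ and $\sigma$ large (roughly $\sigma\ge 2cS R_0$), the right-hand side is nonnegative, e.g. $\sigma N_1-2cSN_1N_4+2cSN_2N_3\ge0$. The operator $\mathcal{T}$ then has the form data$\cdot e^{-\sigma(\cdot)}$ plus $\int e^{-\sigma(t-s)}(\sigma N_i\pm Q)\,ds$, which maps nonnegative functions to nonnegative ones. The compatibility conditions make $\mathcal{T}N$ continuous across the $H_m$. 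Its first derivatives exist off $\bigcup H_m$, so $\mathcal{T}$ maps into $(\mathscr{E}_{\mathcal{H}})^4$.

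\textbf{Step 2 (a priori bound).} Differentiate $\mathcal{T}N$ in $t,x,y$ on each region. The derivatives of the hitting times produce factors $1/(c\cos\theta)$, $1/(c\sin\theta)$, $c\cos\theta$, $c\sin\theta$, and the lengths are bounded by $\tau'-\tau$. Collecting terms should give
\[
\mathscr{N}(\mathcal{T}N)\le \mathfrak{q}_\sigma+\mathfrak{p}_\sigma\,\mathscr{N}(N)^2 ,
\]
with the constants $\mu,\lambda,\delta$ of Notation \ref{rrrrt} arising from this bookkeeping. This is the main obstacle: one has to check region by region that exactly these constants bound every term, including the $e^{-\sigma}$ factors evaluated at the boundary-hitting times.

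\textbf{Step 3 (the threshold).} Verify $\mathfrak{p}_\sigma\mathfrak{q}_\sigma\le1/4$. Using $\tau'-\tau\le g(\mathfrak{q})\le \frac{1}{\lambda\sigma R_0}\bigl(\frac{1}{4\mathfrak{q}(1+\delta\sigma R_0)}-\mu\bigr)$, we get $\mu+\lambda\sigma R_0(\tau'-\tau)\le \frac{1}{4\mathfrak{q}(1+\delta\sigma R_0)}$. Multiplying by $(\sigma+cS)\,\mathfrak{q}(1+\delta\sigma R_0)$ gives the claim. The condition $\mathfrak{q}<f_\sigma(R_0)$ ensures that $g(\mathfrak{q})>0$.

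Set $R=\frac{1-\sqrt{1-4\mathfrak{p}_\sigma\mathfrak{q}_\sigma}}{2\mathfrak{p}_\sigma}$, the smaller root of $\mathfrak{p}_\sigma R^2-R+\mathfrak{q}_\sigma=0$. Then $\mathscr{N}(N)\le R$ implies $\mathscr{N}(\mathcal{T}N)\le R$. One also needs $R\le R_0$ for consistency with the $\sigma$ choice; for that I would use $R\le 2\mathfrak{q}_\sigma$ and the smallness of $\mathfrak{q}$.

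\textbf{Step 4 (fixed point and uniqueness).} $\mathcal{T}$ is continuous for $\|\cdot\|$ because $Q$ is locally Lipschitz. $\mathcal{T}$ maps the convex set $\overline{\mathscr{M}^{+}_{R}}$ into the relatively compact set $\mathscr{M}^{+}_{R}$. Schauder's theorem therefore gives a fixed point, and the fixed point lies in $\mathscr{M}^{+}_{R}$. Differentiating along characteristics shows that it solves $\mathcal{P}_{\tau,\tau'}$, and the bound \eqref{eq:loosqz-1-1-1} holds. Uniqueness follows from a Gronwall estimate on the difference of two bounded solutions along characteristics.

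\textbf{Step 5 (final claim).} When $\tau'-\tau=g(\mathfrak{q})$, use $R\le 2\mathfrak{q}_\sigma=2\mathfrak{q}(1+\delta\sigma R_0)<2f_\sigma(R_0)(1+\delta\sigma R_0)$. Requiring $\gamma$ times this to be at most $f_\sigma(R_0)$ reduces, after substituting $f_\sigma$, to a quadratic inequality $2\delta\sigma R_0^2\cdot\delta\sigma+\dots$ in $R_0$. I expect that $R_0<\varPsi(\sigma)$ is exactly the positive-root condition of this quadratic (in the sharper form that uses $\sqrt{1-4pq}$ instead of $R\le2\mathfrak{q}_\sigma$). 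I would confirm this by direct algebra against the formula \eqref{sskks}.
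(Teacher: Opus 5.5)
There is a genuine gap in Steps~1--2 and another in Step~5. Your overall architecture matches the paper's: Schauder on $\mathscr{M}^{+}_{R}$, with $R$ the smaller root of $\mathfrak{p}_\sigma R^{2}-R+\mathfrak{q}_\sigma=0$, and uniqueness by a Lipschitz/Gronwall argument. The problem is the regularization you choose in Step~1, which cannot deliver Step~2.

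\textbf{Steps 1--2: the damping.} With constant damping $\sigma$, the source $\sigma N_i\pm Q(N)$ is only linear in $N$. So $\mathscr{N}(\mathcal{T}N)$ picks up terms linear in $\mathscr{N}(N)$, with coefficients that grow with $\sigma$ and carry no factor $\tau'-\tau$. Take the $B$-branch of the first component, with hitting time $t_0=t-\frac{x-a_1}{c\cos\theta}$. Differentiating in $x$ the lower limit of the integral and the factor $e^{-\sigma(t-t_0)}$ multiplying $\overline{N^{-}_{1}}$ produces, besides a quadratic remainder,
\[
\frac{\sigma}{c\cos\theta}\,e^{-\sigma(t-t_0)}\Bigl(N_1\bigl(t_0,a_1,y-(x-a_1)\tfrac{\sin\theta}{\cos\theta}\bigr)-\overline{N^{-}_{1}}(t,x,y)\Bigr).
\]
An element of $\mathscr{M}^{+}_{R}$ need not satisfy the boundary condition, so near the face $x=a_1$ this term is of order $\frac{\sigma}{c\cos\theta}R$. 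Likewise, $\partial_t$ of the $A$-branch contains the upper-limit term $\sigma N_1(t,x,y)$. The data also enter with a factor $\sigma$ rather than $\sigma R_0$. A linear term with coefficient $\gtrsim\sigma$ cannot be absorbed, so no ball $\mathscr{M}^{+}_{R}$ is invariant. In any case the estimate is not of the form $\mathfrak{q}_\sigma+\mathfrak{p}_\sigma R^{2}$ with the constants of Notation~\ref{rrrrt}.

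The missing idea is the density-weighted damping of Notation~\ref{nnooauu} and \eqref{olopoz}. The damping is $\sigma\rho(|M|)$ and the source is $Q^{\sigma}_{i}(|M|)=\sigma\rho(|M|)|M_i|\pm Q(|M|)$, which is quadratic:
\[
\|Q^{\sigma}_{i}(M)\|_\infty\le4(\sigma+cS)\mathscr{N}(M)^{2}.
\]
This gives the factor $\sigma+cS$ in $\mathfrak{p}_\sigma$. Differentiating the weight $e^{-\sigma\int\rho(|M|)}$ only produces $\sigma\rho$ or $\sigma\nabla\rho$, each bounded by $4\sigma R_0$. That is where $\delta\sigma R_0$ in $\mathfrak{q}_\sigma$ and $\lambda\sigma R_0(\tau'-\tau)$ in $\mathfrak{p}_\sigma$ come from. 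Every term is then either quadratic in $\mathscr{N}(M)$ or proportional to the data. Nonnegativity also holds for $\sigma\ge2cS$, independently of $R_0$.

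\textbf{Step 3: a slip.} If you use the first expression in \eqref{kqiqiiq}, your multiplication only gives $\mathfrak{p}_\sigma\mathfrak{q}_\sigma\le\frac{\sigma+cS}{4}$. You need the second expression, $\frac{\mu}{\lambda\sigma R_0}\bigl(\frac{f_\sigma(R_0)}{\mathfrak{q}}-1\bigr)$, which is the one used in Proposition~\ref{izujuu} and carries an extra factor $\frac{1}{\sigma+cS}$.

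\textbf{Step 5: the final claim.} With the correct $g$, if $\tau'-\tau=g(\mathfrak{q})<1$ then $4\mathfrak{p}_\sigma\mathfrak{q}_\sigma=1$ exactly. The right side of \eqref{eq:loosqz-1-1-1} is then $2\mathfrak{q}_\sigma$, so there is no sharper form to exploit. Your condition $\gamma\cdot2f_\sigma(R_0)(1+\delta\sigma R_0)\le f_\sigma(R_0)$ reads $2\gamma(1+\delta\sigma R_0)\le1$. This fails for every $R_0>0$ because $\gamma>1$. More generally, $R\ge\mathfrak{q}_\sigma$ always, so no argument that uses only $\max_i\|N_i\|_1\le R$ and $\mathfrak{q}<f_\sigma(R_0)$ can work.

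What $R_0<\varPsi(\sigma)$ actually encodes becomes clear after squaring:
\[
R_0<\varPsi(\sigma)\iff4\mu\gamma(\sigma+cS)R_0(1+\delta\sigma R_0)<1\iff\gamma R_0<f_\sigma(R_0).
\]
So $\varPsi(\sigma)$ is the positive root of the quadratic built from $\gamma R_0$, not from $\gamma R$. The last claim follows, as in Corollary~8.3 of the companion paper, by combining this with $\max_i\|N_i\|_1\le R_0$. That is the radius constraint $R\le R_0$ under which Proposition~\ref{prop::opps-1-1} is proved. The factor $\gamma$ is the one appearing in $\|\overline{N^{\tau}_{i}}\|_1\le\gamma\|N^{\tau}_{i}\|_1$ and its analogues.

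Note that in this regime $R\le R_0$ does not follow from your bound $R\le2\mathfrak{q}_\sigma$. Indeed $2\mathfrak{q}_\sigma$ can be close to $\frac{1}{2\mu(\sigma+cS)}$, whereas $R_0<\frac{1}{4\gamma\mu(\sigma+cS)}$.
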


\begin{thm}
\label{thm:Suppose-.-Then-1-2} For a sufficiently large $\sigma,$
suppose that $R_{0}<\varPsi\left(\sigma\right)$ and 
\begin{multline*}
{\displaystyle \max_{1\leq i\leq4}}\left\{ \right.\left\Vert N^{0}_{i}\right\Vert _{1}\left\Vert N^{-}_{1}\right\Vert _{1},\left\Vert N^{--}_{1}\right\Vert _{1},\left\Vert N^{+}_{2}\right\Vert _{1},\left\Vert N^{--}_{2}\right\Vert _{1},\\
\left\Vert N^{-}_{3}\right\Vert _{1},\left\Vert N^{++}_{3}\right\Vert _{1},\left\Vert N^{+}_{4}\right\Vert _{1},\left\Vert N^{++}_{4}\right\Vert _{1}\left.\right\} \leq R_{0}.
\end{multline*}
Then the problem $\mathcal{P}$ admits a unique non-negative, continuous,
and bounded global solution $N=\left(N_{i}\right)^{4}_{i=1}$ with
bounded derivatives satisfying ${\displaystyle \max_{1\leq i\leq4}}\left\Vert N_{i}\right\Vert _{1}\leq R_{0}.$ 
\end{thm}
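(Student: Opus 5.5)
The plan is to obtain the global solution of $\mathcal{P}$ by iterating Theorem \ref{thm:Suppose-.-Then-1-1-1} on consecutive time slabs $[\tau_k,\tau_{k+1}]$ of fixed length $T\equiv g(\mathfrak{q})$, and to glue the local solutions together. Uniqueness will follow slab by slab from the uniqueness part of Theorem \ref{thm:Suppose-.-Then-1-1-1}.

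\textbf{Step 1: control $\mathfrak{q}$ on the first slab.} On $[0,T]$ the functions $\overline{N^{0}_{i}}$, $\overline{N^{-}_{1}}$, etc.\ of Notation \ref{jskskk} are compositions of the data with affine maps. By the chain rule each first-order derivative of a barred function is a combination of derivatives of the data. The coefficients are among $1$, $c\cos\theta$, $c\sin\theta$, $\frac{1}{c\cos\theta}$, $\frac{\sin\theta}{\cos\theta}$, $\frac{1}{c\sin\theta}$ and $\frac{\cos\theta}{\sin\theta}$. Hence each $\|\overline{\,\cdot\,}\|_1$ is at most $\gamma$ times the $\|\cdot\|_1$ norm of the corresponding datum, which gives $\mathfrak{q}\le\gamma R_0$. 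The condition $R_0<\varPsi(\sigma)$ should be exactly what makes $\gamma R_0<f_\sigma(R_0)$. Indeed, $\varPsi$ is the positive root of $\gamma\mu(\sigma+cS)(4\delta\sigma R^2+4R)=1$, i.e.\ of $4\gamma R\mu(1+\delta\sigma R)(\sigma+cS)=1$, up to the factor conventions in \eqref{sskks}, and I will check the constants. Thus $\mathfrak{q}<f_\sigma(R_0)$. Taking $\tau'-\tau=g(\mathfrak{q})$, Theorem \ref{thm:Suppose-.-Then-1-1-1} gives a unique non-negative solution on $K_{0,T}$ with $\gamma\max_i\|N_i\|_1<f_\sigma(R_0)$.

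\textbf{Step 2: induction.} For the next slab, the new initial datum is $N^{\tau_1}_i=N_i(\tau_1,\cdot,\cdot)$. It is non-negative, and it is $C^1$ in the appropriate piecewise sense with $\|N^{\tau_1}_i\|_1\le\max_i\|N_i\|_1$. Compatibility with the boundary data holds because $N$ satisfies the boundary conditions up to $t=\tau_1$. The boundary data keep norms $\le R_0$. The new constant $\mathfrak{q}^{(1)}$ is bounded by $\gamma\max\{R_0,\max_i\|N_i\|_1\}$. Both terms are $<f_\sigma(R_0)$: the first by Step 1, the second by the conclusion of Theorem \ref{thm:Suppose-.-Then-1-1-1}. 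So the hypotheses are reproduced on every slab.

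Since $g$ is decreasing in $\mathfrak{q}$ and every $\mathfrak{q}^{(k)}$ stays below a fixed bound strictly less than $f_\sigma(R_0)$, the slab lengths are bounded below uniformly. I would rather use the fixed length $T=g(\gamma R_0')$ for a uniform upper bound, using that Theorem \ref{thm:Suppose-.-Then-1-1-1} allows $\tau'-\tau\le g(\mathfrak{q})$. The slabs therefore exhaust $[0,\infty[$.

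\textbf{Step 3: gluing.} Gluing along the planes $t=\tau_k$ gives a continuous $N$ on $[0,\infty[\times[a_1,b_1]\times[a_2,b_2]$. It is differentiable off the hyperplanes and the slab interfaces, and it satisfies the equations there. The bound $\max_i\|N_i\|_1\le R_0$ should come from the estimate \eqref{eq:loosqz-1-1-1} combined with $\gamma\ge1$ and $f_\sigma(R_0)\le R_0$-type inequalities valid for large $\sigma$. Uniqueness on $[0,\infty[$ is immediate: two solutions agree on each slab successively.

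\textbf{Main obstacle.} The delicate point is Step 2. There I must ensure that the bound propagates without growth; the conclusion must be $\gamma\|N\|_1<f_\sigma(R_0)$ rather than a bound on $\mathfrak{q}$ alone. I must also check that the data at $t=\tau_k$, which are only piecewise $C^1$ across traces of the planes $H_m$, are admissible. I expect to handle this by noting that the fixed-point construction of Theorem \ref{thm:Suppose-.-Then-1-1-1} only uses $\|\cdot\|_1$ bounds and continuity of the barred data, which the traces retain.
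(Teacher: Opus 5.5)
Your skeleton is the route the paper points to; the paper itself only defers to the global extension in its predecessor. The ingredients match: slabs governed by Theorem \ref{thm:Suppose-.-Then-1-1-1}, the bound $\mathfrak{q}\le\gamma R_0$, and the correct observation that $R_0<\varPsi(\sigma)$ is equivalent to $\gamma R_0<f_\sigma(R_0)$. The gap is in Step 2: it does not exhaust $[0,+\infty[$. Your induction only gives $\mathfrak{q}^{(k)}<f_\sigma(R_0)$ for each $k$, with no margin independent of $k$. Nothing then prevents $g(\mathfrak{q}^{(k)})\to0$ with $\sum_k g(\mathfrak{q}^{(k)})<\infty$. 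The ``fixed bound strictly less than $f_\sigma(R_0)$'' is asserted, not derived, and $R_0'$ in $T=g(\gamma R_0')$ is never defined. The quantitative estimate does not supply it either. When $\tau'-\tau=g(\mathfrak{q})<1$ one has $\mathfrak{p}_\sigma\mathfrak{q}_\sigma=\frac14$, so \eqref{eq:loosqz-1-1-1} reads $\max_i\|N_i\|_1\le 2\mathfrak{q}_\sigma=2\mathfrak{q}(1+\delta\sigma R_0)$. Passing the trace to the next slab therefore lets $\mathfrak{q}$ grow by the factor $2\gamma(1+\delta\sigma R_0)>1$ per slab. What is missing is a slab-independent a priori bound $\max_i\|N_i\|_1\le\bar R$ with $\gamma\bar R<f_\sigma(R_0)$, reproduced on every slab without loss. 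In the statement this role is meant to be played by $\max_i\|N_i\|_1\le R_0$, which would give $\mathfrak{q}^{(k)}\le\gamma R_0$ and slabs of length $g(\gamma R_0)$. That is exactly the bound you postpone to Step 3, and Step 3 does not deliver it.

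The inequality you invoke in Step 3 is reversed. By your own Step 1, $f_\sigma(R_0)>\gamma R_0>R_0$. So $\gamma\max_i\|N_i\|_1<f_\sigma(R_0)$ gives only $\max_i\|N_i\|_1<f_\sigma(R_0)/\gamma$, which exceeds $R_0$. Likewise the right-hand side of \eqref{eq:loosqz-1-1-1} is at least $\mathfrak{q}_\sigma\ge\mathfrak{q}$, and $\mathfrak{q}$ contains $\|\partial_t\overline{N^{0}_{1}}\|_\infty$, which can reach $c(\cos\theta+\sin\theta)R_0>R_0$. In fact $\max_i\|N_i\|_1\le R_0$ cannot be proved in general, because $\|\cdot\|_1$ includes $\partial_t$. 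Take zero data for $N_2,N_3,N_4$ and $N^0_1=\frac{R_0}{2}\bigl(1+\sin 2(x+y)\bigr)$ on a rectangle with the origin in its interior. Take $N^-_1$ and $N^{--}_1$ to be its traces on $x=a_1$ and $y=a_2$, constant in $t$. All hypotheses hold for any $R_0<\varPsi(\sigma)$, and $Q\equiv0$, so $N_1$ is free transport. Then $\|\partial_t N_1\|_\infty=c(\cos\theta+\sin\theta)R_0>R_0$ as soon as $c(\cos\theta+\sin\theta)>1$, which holds for every $\theta$ if $c\ge1$. So you need a weaker invariant and a proof that it propagates without loss; neither Theorem \ref{thm:Suppose-.-Then-1-1-1} nor \eqref{eq:loosqz-1-1-1} provides one.

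A smaller point: the traces $N_i(\tau_k,\cdot,\cdot)$ are only piecewise $C^1$. Proposition \ref{prop::kko-1-1} needs the barred data to be differentiable off the planes, not merely continuous with $\|\cdot\|_1$ bounds. On each new slab you must therefore enlarge $\mathcal{H}$ by the planes swept by the kink lines along the four velocities. This is harmless, since Proposition \ref{prop::odlp-1} holds for any finite family of planes.
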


\section{Integral Operators and Fixed Point Formulation }\label{sec:Integral-Operators-and}

\subsection{Integral operators}
\begin{notation}
Let $f_{1}$ and $f_{2}$ be two real functions defined on the compact
$K_{\tau,\tau'}.$ The identity function of the set $\left\{ \left(t,x,y\right)\in K_{\tau,\tau'}:f_{1}\left(t,x,y\right)\leq0\text{ and }f_{2}\left(t,x,y\right)\leq0\right\} $
is denoted by $\mathbb{I}_{\begin{cases}
f_{1}\left(t,x,y\right)\leq0\\
f_{2}\left(t,x,y\right)\leq0
\end{cases}}.$ 
\end{notation}

\begin{defn}
We introduce the integral operator, denoted by $\mathcal{T}_{\tau,\tau'}$,
which maps any function $M=(M_{1},M_{2},M_{3},M_{4})\in C(K_{\tau,\tau'};\mathbb{R})^{4}$
to its image $\mathcal{T}_{\tau,\tau'}M=\left(\left(\mathcal{T}_{\tau,\tau'}M\right)_{i}\right)^{4}_{i=1}$.
For any $(t,x,y)\in K_{\tau,\tau'}$, the components of this image
are explicitly defined by formulas \eqref{eq:oloooso-1} through \eqref{eq:olole-1-1-1-1-1-2-1-1}:
\begin{multline}
\left(\mathcal{T}_{\tau,\tau'}M\right)_{1}\left(t,x,y\right)=\left(\mathcal{T}_{\tau,\tau'}M\right)^{A}_{1}\left(t,x,y\right)\cdot\mathbb{I}_{\begin{cases}
x-c\left(t-\tau\right)\cos\theta\geq a_{1}\\
y-c\left(t-\tau\right)\sin\theta\geq a_{2}
\end{cases}}\left(t,x,y\right)+\\
\left(\mathcal{T}_{\tau,\tau'}M\right)^{B}_{1}\left(t,x,y\right)\cdot\mathbb{I}_{\begin{cases}
x-c\left(t-\tau\right)\cos\theta\leq a_{1}\\
x\sin\theta-y\cos\theta\leq a_{1}\sin\theta-a_{2}\cos\theta
\end{cases}}\left(t,x,y\right)+\\
\left(\mathcal{T}_{\tau,\tau'}M\right)^{C}_{1}\left(t,x,y\right)\cdot\mathbb{I}_{\begin{cases}
y-c\left(t-\tau\right)\sin\theta\leq a_{2}\\
x\sin\theta-y\cos\theta\geq a_{1}\sin\theta-a_{2}\cos\theta
\end{cases}}\left(t,x,y\right)\label{eq:oloooso-1}
\end{multline}
\begin{multline}
\left(\mathcal{T}_{\tau,\tau'}M\right)^{A}_{1}\left(t,x,y\right)={\displaystyle \int^{t}_{\tau}}Q\left(M\right)\left(\right.s,x+c\left(s-t\right)\cos\theta,\\
y+c\left(s-t\right)\sin\theta\left.\right)ds+\overline{N^{\tau}_{1}}\left(t,x,y\right)\label{eq:uiepom-1}
\end{multline}
\begin{multline}
\left(\mathcal{T}_{\tau,\tau'}M\right)^{B}_{1}\left(t,x,y\right)={\textstyle {\displaystyle \int^{t}_{t-\frac{1}{c\cos\theta}x+\frac{a_{1}}{c\cos\theta}}}}{\textstyle Q\left(M\right)}\left(\right.s,\\
x+c\left(s-t\right)\cos\theta,y+c\left(s-t\right)\sin\theta\left.\right)ds+\overline{N^{-}_{1}}\left(t,x,y\right);\label{opiozp-1}
\end{multline}
\begin{multline}
\left(\mathcal{T}_{\tau,\tau'}M\right)^{C}_{1}\left(t,x,y\right)={\textstyle {\displaystyle \int^{t}_{t-\frac{1}{c\sin\theta}y+\frac{a_{2}}{c\sin\theta}}}}{\textstyle Q\left(M\right)}\left(\right.s,\\
x+c\left(s-t\right)\cos\theta,y+c\left(s-t\right)\sin\theta\left.\right)ds+\overline{N^{--}_{1}}\left(t,x,y\right).\label{eq:looaoolzo-1}
\end{multline}
\begin{multline}
\left(\mathcal{T}_{\tau,\tau'}M\right)_{2}\left(t,x,y\right)=\left(\mathcal{T}_{\tau,\tau'}M\right)^{A}_{2}\left(t,x,y\right)\cdot\mathbb{I}_{\begin{cases}
x+c\left(t-\tau\right)\sin\theta\leq b_{1}\\
y-c\left(t-\tau\right)\cos\theta\geq a_{2}
\end{cases}}\left(t,x,y\right)+\\
\left(\mathcal{T}_{\tau,\tau'}M\right)^{B}_{2}\left(t,x,y\right)\cdot\mathbb{I}_{\begin{cases}
x+c\left(t-\tau\right)\sin\theta\geq b_{1}\\
x\cos\theta+y\sin\theta\geq b_{1}\cos\theta+a_{2}\sin\theta
\end{cases}}\left(t,x,y\right)+\\
\left(\mathcal{T}_{\tau,\tau'}M\right)^{C}_{2}\left(t,x,y\right)\cdot\mathbb{I}_{\begin{cases}
y-c\left(t-\tau\right)\cos\theta\leq a_{2}\\
x\cos\theta+y\sin\theta\leq b_{1}\cos\theta+a_{2}\sin\theta
\end{cases}}\left(t,x,y\right)\label{eq:ppmms-2-1-3-1}
\end{multline}

\begin{multline}
\left(\mathcal{T}_{\tau,\tau'}M\right)^{A}_{2}\left(t,x,y\right)={\textstyle {\displaystyle \int^{t}_{\tau}}}-Q\left(M\right)\left(\right.s,x-c\left(s-t\right)\sin\theta,\\
y+c\left(s-t\right)\cos\theta\left.\right)ds+\overline{N^{\tau}_{2}}\left(t,x,y\right)\label{eq:zoppz-1}
\end{multline}

\begin{multline}
\left(\mathcal{T}_{\tau,\tau'}M\right)^{B}_{2}\left(t,x,y\right)={\textstyle {\displaystyle \int^{t}_{t+\frac{1}{c\sin\theta}x-\frac{b_{1}}{c\sin\theta}}}}-Q\left(M\right)\left(\right.s,x-c\left(s-t\right)\sin\theta,\\
y+c\left(s-t\right)\cos\theta\left.\right)ds+{\textstyle \overline{N^{+}_{2}}\left(t,x,y\right);}\label{eq:opps-1}
\end{multline}

\begin{multline}
\left(\mathcal{T}_{\tau,\tau'}M\right)^{C}_{2}\left(t,x,y\right)={\textstyle {\displaystyle \int^{t}_{t-\frac{1}{c\cos\theta}y+\frac{a_{2}}{c\cos\theta}}}}-Q\left(M\right)\left(\right.s,x-c\left(s-t\right)\sin\theta,\\
y+c\left(s-t\right)\cos\theta\left.\right)ds+\overline{N^{--}_{2}}\left(t,x,y\right);\label{eq:opos-1}
\end{multline}

\begin{multline}
\left(\mathcal{T}_{\tau,\tau'}M\right)_{3}\left(t,x,y\right)=\left(\mathcal{T}_{\tau,\tau'}M\right)^{A}_{3}\left(t,x,y\right)\cdot\mathbb{I}_{\begin{cases}
x-c\left(t-\tau\right)\sin\theta\geq a_{1}\\
y+c\left(t-\tau\right)\cos\theta\leq b_{2}
\end{cases}}\left(t,x,y\right)+\\
\left(\mathcal{T}_{\tau,\tau'}M\right)^{B}_{3}\left(t,x,y\right)\cdot\mathbb{I}_{\begin{cases}
x-c\left(t-\tau\right)\sin\theta\leq a_{1}\\
x\cos\theta+y\sin\theta\leq a_{1}\cos\theta+b_{2}\sin\theta
\end{cases}}\left(t,x,y\right)+\\
\left(\mathcal{T}_{\tau,\tau'}M\right)^{C}_{3}\left(t,x,y\right)\cdot\mathbb{I}_{\begin{cases}
y+c\left(t-\tau\right)\cos\theta\geq b_{2}\\
x\cos\theta+y\sin\theta\geq a_{1}\cos\theta+b_{2}\sin\theta
\end{cases}}\left(t,x,y\right)\label{aoalal-1-1-2-1}
\end{multline}
\begin{multline}
\left(\mathcal{T}_{\tau,\tau'}M\right)^{A}_{3}\left(t,x,y\right)={\displaystyle \int^{t}_{\tau}}-Q\left(M\right)\left(s,x+c\left(s-t\right)\sin\theta,y-c\left(s-t\right)\cos\theta\right)ds+\\
\overline{N^{\tau}_{3}}\left(t,x,y\right)\label{eq:lqooqsd-1}
\end{multline}

\begin{multline}
\left(\mathcal{T}_{\tau,\tau'}M\right)^{B}_{3}\left(t,x,y\right)=\\
{\displaystyle \int^{t}_{t-\frac{1}{c\sin\theta}x+\frac{a_{1}}{c\sin\theta}}}-Q\left(M\right)\left(s,x+c\left(s-t\right)\sin\theta,y-c\left(s-t\right)\cos\theta\right)ds+\\
{\textstyle \overline{N^{-}_{3}}\left(t,x,y\right);}\label{eq:kosil-1}
\end{multline}

\begin{multline}
\left(\mathcal{T}_{\tau,\tau'}M\right)^{C}_{3}\left(t,x,y\right)=\\
{\displaystyle \int^{t}_{t+\frac{1}{c\cos\theta}y-\frac{b_{2}}{c\cos\theta}}}-Q\left(M\right)\left(s,x+c\left(s-t\right)\sin\theta,y-c\left(s-t\right)\cos\theta\right)ds+\\
{\textstyle \overline{N^{++}_{3}}\left(t,x,y\right)}\label{eq:loiuij-1}
\end{multline}

\begin{multline}
\left(\mathcal{T}_{\tau,\tau'}M\right)_{4}\left(t,x,y\right)=\left(\mathcal{T}_{\tau,\tau'}M\right)^{A}_{4}\left(t,x,y\right)\cdot\mathbb{I}_{\begin{cases}
x+c\left(t-\tau\right)\cos\theta\leq b_{1}\\
y+c\left(t-\tau\right)\sin\theta\leq b_{2}
\end{cases}}\left(t,x,y\right)+\\
\left(\mathcal{T}_{\tau,\tau'}M\right)^{B}_{4}\left(t,x,y\right)\cdot\mathbb{I}_{\begin{cases}
x+c\left(t-\tau\right)\cos\theta\geq b_{1}\\
x\sin\theta-y\cos\theta\geq b_{1}\sin\theta-b_{2}\cos\theta
\end{cases}}\left(t,x,y\right)+\\
\left(\mathcal{T}_{\tau,\tau'}M\right)^{C}_{4}\left(M\right)\left(t,x,y\right)\cdot\mathbb{I}_{\begin{cases}
y+c\left(t-\tau\right)\sin\theta\geq b_{2}\\
x\sin\theta-y\cos\theta\leq b_{1}\sin\theta-b_{2}\cos\theta
\end{cases}}\left(t,x,y\right)\label{aoalal-1-1-1-2-1}
\end{multline}

\begin{multline}
\left(\mathcal{T}_{\tau,\tau'}M\right)^{A}_{4}\left(t,x,y\right)={\displaystyle \int^{t}_{0}}Q\left(M\right)\left(s,x-c\left(s-t\right)\cos\theta,y-c\left(s-t\right)\sin\theta\right)ds+\\
\overline{N^{\tau}_{4}}\left(t,x,y\right)\label{eq:olole-1-2-1-1-1}
\end{multline}

\begin{multline}
\left(\mathcal{T}_{\tau,\tau'}M\right)^{B}_{4}\left(t,x,y\right)=\\
{\displaystyle \int^{t}_{t+\frac{1}{c\cos\theta}x-\frac{b_{1}}{c\cos\theta}}}Q\left(M\right)\left(s,x-c\left(s-t\right)\cos\theta,y-c\left(s-t\right)\sin\theta\right)ds+\\
{\textstyle \overline{N^{+}_{4}}\left(t,x,y\right);}\label{eq:olole-1-1-1-2-1-1}
\end{multline}

\begin{multline}
\left(\mathcal{T}_{\tau,\tau'}M\right)^{C}_{4}\left(t,x,y\right)=\\
{\displaystyle \int^{t}_{t+\frac{1}{c\sin\theta}y-\frac{b_{2}}{c\sin\theta}}}Q\left(M\right)\left(s,x-c\left(s-t\right)\cos\theta,y-c\left(s-t\right)\sin\theta\right)ds+\\
{\textstyle \overline{N^{++}_{4}}\left(t,x,y\right).}\label{eq:olole-1-1-1-1-1-2-1-1}
\end{multline}
\end{defn}

\begin{notation}
\label{nnooauu}Furthermore, consider for $\sigma>0$ and any function
$M=(M_{1},M_{2},M_{3},M_{4})\in C(K_{\tau,\tau'};\mathbb{R})^{4}$,
the functions $|M|=\left(|M_{1}|,|M_{2}|,|M_{3}|,|M_{4}|\right)$,
$\rho(M)=\sum^{4}_{i=1}M_{i}$ and 
\begin{equation}
\begin{cases}
Q^{\sigma}_{1}(M)=\sigma\rho(M)M_{1}+Q(M),\\
Q^{\sigma}_{2}(M)=\sigma\rho(M)M_{2}-Q(M),\\
Q^{\sigma}_{3}(M)=\sigma\rho(M)M_{3}-Q(M),\\
Q^{\sigma}_{4}(M)=\sigma\rho(M)M_{4}+Q(M).
\end{cases}\label{pzlooo}
\end{equation}
introduced in the Section 3 of \cite{sob almeida 2026 arxiv 1}. It
is shown there that $Q^{\sigma}_{i}\left(\left|M\right|\right)\geq0$
for all $M$ and all $i\in\{1,2,3,4\}$, provided that $\sigma\geq2cS$. 
\end{notation}

\begin{defn}
For any $\sigma>0$, we introduce the integral operator denoted by
$\mathcal{T}^{\sigma}_{\tau,\tau'}$, which maps any function $M=(M_{1},M_{2},M_{3},M_{4})\in C(K_{\tau,\tau'};\mathbb{R})^{4}$
to its image $\mathcal{T}^{\sigma}_{\tau,\tau'}M=\left(\left(\mathcal{T}^{\sigma}_{\tau,\tau'}M\right)_{i}\right)^{4}_{i=1}$.
For any $(t,x,y)\in K_{\tau,\tau'}$, the components of this image
are explicitly defined by formulas \eqref{eq:oloooso-1-1} through
\eqref{eq:olole-1-1-1-1-1-2-2}: 
\begin{multline}
\left(\mathcal{T}^{\sigma}_{\tau,\tau'}M\right)_{1}\left(t,x,y\right)=\left(\mathcal{T}^{\sigma}_{\tau,\tau'}M\right)^{A}_{1}\left(t,x,y\right)\cdot\mathbb{I}_{\begin{cases}
x-c\left(t-\tau\right)\cos\theta\geq a_{1}\\
y-c\left(t-\tau\right)\sin\theta\geq a_{2}
\end{cases}}\left(t,x,y\right)+\\
\left(\mathcal{T}^{\sigma}_{\tau,\tau'}M\right)^{B}_{1}\left(t,x,y\right)\cdot\mathbb{I}_{\begin{cases}
x-c\left(t-\tau\right)\cos\theta\leq a_{1}\\
x\sin\theta-y\cos\theta\leq a_{1}\sin\theta-a_{2}\cos\theta
\end{cases}}\left(t,x,y\right)+\\
\left(\mathcal{T}^{\sigma}_{\tau,\tau'}M\right)^{C}_{1}\left(t,x,y\right)\cdot\mathbb{I}_{\begin{cases}
y-c\left(t-\tau\right)\sin\theta\leq a_{2}\\
x\sin\theta-y\cos\theta\geq a_{1}\sin\theta-a_{2}\cos\theta
\end{cases}}\left(t,x,y\right)\label{eq:oloooso-1-1}
\end{multline}
where 
\begin{multline}
\left(\mathcal{T}^{\sigma}_{\tau,\tau'}M\right)^{A}_{1}\left(t,x,y\right)=\Biggl({\displaystyle \int^{t}_{\tau}}e^{\sigma\int^{s}_{\tau}\rho\left(\left|M\right|\right)\left(r,x+c\left(r-t\right)\cos\theta,y+c\left(r-t\right)\sin\theta\right)dr}\cdot Q^{\sigma}_{1}\left(\left|M\right|\right)\\
\left(s,x+c\left(s-t\right)\cos\theta,y+c\left(s-t\right)\sin\theta\right)ds+\overline{N^{\tau}_{1}}\left(t,x,y\right)\Biggr)\cdot\\
e^{-\sigma{\textstyle {\displaystyle \int^{t}_{\tau}}}\rho\left(\left|M\right|\right)\left(s,x+c\left(s-t\right)\cos\theta,y+c\left(s-t\right)\sin\theta\right)ds}.\label{eq:olole-1-3}
\end{multline}

\begin{multline}
\left(\mathcal{T}^{\sigma}_{\tau,\tau'}M\right)^{B}_{1}\left(t,x,y\right)=\\
\Biggl({\textstyle {\displaystyle \int^{t}_{t-\frac{1}{c\cos\theta}x+\frac{a_{1}}{c\cos\theta}}}e^{\sigma\int^{s}_{t-\frac{1}{c\cos\theta}x+\frac{a_{1}}{c\cos\theta}}\rho\left(\left|M\right|\right)\left(r,x+c\left(r-t\right)\cos\theta,y+c\left(r-t\right)\sin\theta\right)dr}\cdot Q^{\sigma}_{1}\left(\left|M\right|\right)}\\
\left(s,x+c\left(s-t\right)\cos\theta,y+c\left(s-t\right)\sin\theta\right)ds+\overline{N^{-}_{1}}\left(t,x,y\right){\textstyle \Biggr)}\cdot\\
{\textstyle e^{-\sigma{\displaystyle \int^{t}_{t-\frac{1}{c\cos\theta}x+\frac{a_{1}}{c\cos\theta}}}\rho\left(\left|M\right|\right)\left(s,x+c\left(s-t\right)\cos\theta,y+c\left(s-t\right)\sin\theta\right)ds}}.\label{eq:olole-1-1-2}
\end{multline}
\begin{multline}
\left(\mathcal{T}^{\sigma}_{\tau,\tau'}M\right)^{C}_{1}\left(t,x,y\right)=\\
\Biggl({\textstyle {\displaystyle \int^{t}_{t-\frac{1}{c\sin\theta}y+\frac{a_{2}}{c\sin\theta}}}e^{\sigma\int^{s}_{t-\frac{1}{c\sin\theta}y+\frac{a_{2}}{c\sin\theta}}\rho\left(\left|M\right|\right)\left(r,x+c\left(r-t\right)\cos\theta,y+c\left(r-t\right)\sin\theta\right)dr}}\cdot Q^{\sigma}_{1}\left(\left|M\right|\right)\\
{\textstyle \left(s,x+c\left(s-t\right)\cos\theta,y+c\left(s-t\right)\sin\theta\right)ds}+{\textstyle \overline{N^{--}_{1}}\left(t,x,y\right)\Biggr)}\cdot\\
{\textstyle e^{-\sigma{\displaystyle \int^{t}_{t-\frac{1}{c\sin\theta}y+\frac{a_{2}}{c\sin\theta}}}\rho\left(\left|M\right|\right)\left(s,x+c\left(s-t\right)\cos\theta,y+c\left(s-t\right)\sin\theta\right)ds}}.\label{eq:olole-1-1-1-1-3}
\end{multline}
\begin{multline}
\left(\mathcal{T}^{\sigma}_{\tau,\tau'}M\right)_{2}\left(t,x,y\right)=\left(\mathcal{T}^{\sigma}_{\tau,\tau'}M\right)^{A}_{2}\left(t,x,y\right)\cdot\mathbb{I}_{\begin{cases}
x+c\left(t-\tau\right)\sin\theta\leq b_{1}\\
y-c\left(t-\tau\right)\cos\theta\geq a_{2}
\end{cases}}\left(t,x,y\right)+\\
\left(\mathcal{T}^{\sigma}_{\tau,\tau'}M\right)^{B}_{2}\left(t,x,y\right)\cdot\mathbb{I}_{\begin{cases}
x+c\left(t-\tau\right)\sin\theta\geq b_{1}\\
x\cos\theta+y\sin\theta\geq b_{1}\cos\theta+a_{2}\sin\theta
\end{cases}}\left(t,x,y\right)+\\
\left(\mathcal{T}^{\sigma}_{\tau,\tau'}M\right)^{C}_{2}\left(t,x,y\right)\cdot\mathbb{I}_{\begin{cases}
y-c\left(t-\tau\right)\cos\theta\leq a_{2}\\
x\cos\theta+y\sin\theta\leq b_{1}\cos\theta+a_{2}\sin\theta
\end{cases}}\left(t,x,y\right)\label{eq:ppmms-2-1-3-1-1}
\end{multline}
where 
\begin{multline}
\left(\mathcal{T}^{\sigma}_{\tau,\tau'}M\right)^{A}_{2}\left(t,x,y\right)=\Biggl({\textstyle {\displaystyle \int^{t}_{\tau}}e^{\sigma\int^{s}_{\tau}\rho\left(\left|M\right|\right)\left(r,x-c\left(r-t\right)\sin\theta,y+c\left(r-t\right)\cos\theta\right)dr}\cdot}\\
{\textstyle Q^{\sigma}_{2}\left(\left|M\right|\right)\left(s,x-c\left(s-t\right)\sin\theta,y+c\left(s-t\right)\cos\theta\right)ds}+{\textstyle \overline{N^{\tau}_{2}}\left(t,x,y\right)\Biggr)}\cdot\\
{\textstyle e^{-\sigma{\displaystyle \int^{t}_{\tau}}\rho\left(\left|M\right|\right)\left(s,x-c\left(s-t\right)\sin\theta,y+c\left(s-t\right)\cos\theta\right)ds}}.\label{eq:olole-1-1-1-1-1-1-1}
\end{multline}
\begin{multline}
\left(\mathcal{T}^{\sigma}_{\tau,\tau'}M\right)^{B}_{2}\left(t,x,y\right)=\\
\Biggl({\textstyle {\displaystyle \int^{t}_{t+\frac{1}{c\sin\theta}x-\frac{b_{1}}{c\sin\theta}}}e^{\sigma\int^{s}_{t+\frac{1}{c\sin\theta}x-\frac{b_{1}}{c\sin\theta}}\rho\left(\left|M\right|\right)\left(r,x-c\left(r-t\right)\sin\theta,y+c\left(r-t\right)\cos\theta\right)dr}}\cdot\\
{\textstyle Q^{\sigma}_{2}\left(\left|M\right|\right)\left(s,x-c\left(s-t\right)\sin\theta,y+c\left(s-t\right)\cos\theta\right)ds}+{\textstyle \overline{N^{+}_{2}}\left(t,x,y\right)\Biggr)}\cdot\\
{\textstyle e^{-\sigma{\displaystyle \int^{t}_{t+\frac{1}{c\sin\theta}x-\frac{b_{1}}{c\sin\theta}}}\rho\left(\left|M\right|\right)\left(s,x-c\left(s-t\right)\sin\theta,y+c\left(s-t\right)\cos\theta\right)ds}}.\label{eq:olole-1-1-1-1-2-2}
\end{multline}

\begin{multline}
\left(\mathcal{T}^{\sigma}_{\tau,\tau'}M\right)^{C}_{2}\left(t,x,y\right)=\\
\Biggl({\textstyle {\displaystyle \int^{t}_{t-\frac{1}{c\cos\theta}y+\frac{a_{2}}{c\cos\theta}}}e^{\sigma\int^{s}_{t-\frac{1}{c\cos\theta}y+\frac{a_{2}}{c\cos\theta}}\rho\left(\left|M\right|\right)\left(r,x-c\left(r-t\right)\sin\theta,y+c\left(r-t\right)\cos\theta\right)dr}}\cdot\\
{\textstyle Q^{\sigma}_{2}\left(\left|M\right|\right)\left(s,x-c\left(s-t\right)\sin\theta,y+c\left(s-t\right)\cos\theta\right)ds}+{\textstyle \overline{N^{--}_{2}}\left(t,x,y\right)\Biggr)}\cdot\\
{\textstyle e^{-\sigma{\displaystyle \int^{t}_{t-\frac{1}{c\cos\theta}y+\frac{a_{2}}{c\cos\theta}}}\rho\left(\left|M\right|\right)\left(s,x-c\left(s-t\right)\sin\theta,y+c\left(s-t\right)\cos\theta\right)ds}}.\label{eq:olole-1-1-1-1-2-1-1}
\end{multline}

\begin{multline}
\left(\mathcal{T}^{\sigma}_{\tau,\tau'}M\right)_{3}\left(t,x,y\right)=\left(\mathcal{T}^{\sigma}_{\tau,\tau'}M\right)^{A}_{3}\left(t,x,y\right)\cdot\mathbb{I}_{\begin{cases}
x-c\left(t-\tau\right)\sin\theta\geq a_{1}\\
y+c\left(t-\tau\right)\cos\theta\leq b_{2}
\end{cases}}\left(t,x,y\right)+\\
\left(\mathcal{T}^{\sigma}_{\tau,\tau'}M\right)^{B}_{3}\left(t,x,y\right)\cdot\mathbb{I}_{\begin{cases}
x-c\left(t-\tau\right)\sin\theta\leq a_{1}\\
x\cos\theta+y\sin\theta\leq a_{1}\cos\theta+b_{2}\sin\theta
\end{cases}}\left(t,x,y\right)+\\
\left(\mathcal{T}^{\sigma}_{\tau,\tau'}M\right)^{C}_{3}\left(t,x,y\right)\cdot\mathbb{I}_{\begin{cases}
y+c\left(t-\tau\right)\cos\theta\geq b_{2}\\
x\cos\theta+y\sin\theta\geq a_{1}\cos\theta+b_{2}\sin\theta
\end{cases}}\left(t,x,y\right)\label{aoalal-1-1-2-1-1}
\end{multline}
where

\begin{multline}
\left(\mathcal{T}^{\sigma}_{\tau,\tau'}M\right)^{A}_{3}\left(t,x,y\right)=\Biggl({\displaystyle \int^{t}_{\tau}}e^{\sigma\int^{s}_{\tau}\rho\left(\left|M\right|\right)\left(r,x+c\left(r-t\right)\sin\theta,y-c\left(r-t\right)\cos\theta\right)dr}\cdot Q^{\sigma}_{3}\left(\left|M\right|\right)\\
\left(s,x+c\left(s-t\right)\sin\theta,y-c\left(s-t\right)\cos\theta\right)ds+\overline{N^{\tau}_{3}}\left(t,x,y\right)\Biggr)\cdot\\
e^{-\sigma{\textstyle {\displaystyle \int^{t}_{\tau}}}\rho\left(\left|M\right|\right)\left(s,x+c\left(s-t\right)\sin\theta,y-c\left(s-t\right)\cos\theta\right)ds}.\label{eq:olole-1-2-2}
\end{multline}
\begin{multline}
\left(\mathcal{T}^{\sigma}_{\tau,\tau'}M\right)^{B}_{3}\left(t,x,y\right)=\\
\Biggl({\textstyle {\displaystyle \int^{t}_{t-\frac{1}{c\sin\theta}x+\frac{a_{1}}{c\sin\theta}}}e^{\sigma\int^{s}_{t-\frac{1}{c\sin\theta}x+\frac{a_{1}}{c\sin\theta}}\rho\left(\left|M\right|\right)\left(r,x+c\left(r-t\right)\sin\theta,y-c\left(r-t\right)\cos\theta\right)dr}}\cdot\\
{\textstyle Q^{\sigma}_{3}\left(\left|M\right|\right)\left(s,x+c\left(s-t\right)\sin\theta,y-c\left(s-t\right)\cos\theta\right)ds}+{\textstyle \overline{N^{-}_{3}}\left(t,x,y\right)\biggr)}\cdot\\
{\textstyle e^{-\sigma{\displaystyle \int^{t}_{t-\frac{1}{c\sin\theta}x+\frac{a_{1}}{c\sin\theta}}}\rho\left(\left|M\right|\right)\left(s,x+c\left(s-t\right)\sin\theta,y-c\left(s-t\right)\cos\theta\right)ds}}.\label{eq:olole-1-1-1-3}
\end{multline}

\begin{multline}
\left(\mathcal{T}^{\sigma}_{\tau,\tau'}M\right)^{C}_{3}\left(t,x,y\right)=\\
\Biggl({\textstyle {\displaystyle \int^{t}_{t+\frac{1}{c\cos\theta}y-\frac{b_{2}}{c\cos\theta}}}e^{\sigma\int^{s}_{t+\frac{1}{c\cos\theta}y-\frac{b_{2}}{c\cos\theta}}\rho\left(\left|M\right|\right)\left(r,x+c\left(r-t\right)\sin\theta,y-c\left(r-t\right)\cos\theta\right)dr}}\cdot\\
{\textstyle Q^{\sigma}_{3}\left(\left|M\right|\right)\left(s,x+c\left(s-t\right)\sin\theta,y-c\left(s-t\right)\cos\theta\right)ds}+{\textstyle \overline{N^{++}_{3}}\left(t,x,y\right)\Biggr)}\cdot\\
{\textstyle e^{-\sigma{\displaystyle \int^{t}_{t+\frac{1}{c\cos\theta}y-\frac{b_{2}}{c\cos\theta}}}\rho\left(\left|M\right|\right)\left(s,x+c\left(s-t\right)\sin\theta,y-c\left(s-t\right)\cos\theta\right)ds}}.\label{eq:olole-1-1-1-1-1-3}
\end{multline}

\begin{multline}
\left(\mathcal{T}^{\sigma}_{\tau,\tau'}M\right)_{4}\left(t,x,y\right)=\left(\mathcal{T}^{\sigma}_{\tau,\tau'}M\right)^{A}_{4}\left(t,x,y\right)\cdot\mathbb{I}_{\begin{cases}
x+c\left(t-\tau\right)\cos\theta\leq b_{1}\\
y+c\left(t-\tau\right)\sin\theta\leq b_{2}
\end{cases}}\left(t,x,y\right)+\\
\left(\mathcal{T}^{\sigma}_{\tau,\tau'}M\right)^{B}_{4}\left(t,x,y\right)\cdot\mathbb{I}_{\begin{cases}
x+c\left(t-\tau\right)\cos\theta\geq b_{1}\\
x\sin\theta-y\cos\theta\geq b_{1}\sin\theta-b_{2}\cos\theta
\end{cases}}\left(t,x,y\right)+\\
\left(\mathcal{T}^{\sigma}_{\tau,\tau'}M\right)^{C}_{4}\left(M\right)\left(t,x,y\right)\cdot\mathbb{I}_{\begin{cases}
y+c\left(t-\tau\right)\sin\theta\geq b_{2}\\
x\sin\theta-y\cos\theta\leq b_{1}\sin\theta-b_{2}\cos\theta
\end{cases}}\left(t,x,y\right)\label{aoalal-1-1-1-2-1-1}
\end{multline}
where

\begin{multline}
\left(\mathcal{T}^{\sigma}_{\tau,\tau'}M\right)^{A}_{4}\left(t,x,y\right)=\Biggl({\displaystyle \int^{t}_{\tau}}e^{\sigma\int^{s}_{\tau}\rho\left(\left|M\right|\right)\left(r,x-c\left(r-t\right)\cos\theta,y-c\left(r-t\right)\sin\theta\right)dr}\cdot Q^{\sigma}_{4}\left(\left|M\right|\right)\\
\left(s,x-c\left(s-t\right)\cos\theta,y-c\left(s-t\right)\sin\theta\right)ds+\overline{N^{\tau}_{4}}\left(t,x,y\right)\Biggr)\cdot\\
e^{-\sigma{\textstyle {\displaystyle \int^{t}_{\tau}}}\rho\left(\left|M\right|\right)\left(s,x-c\left(s-t\right)\cos\theta,y-c\left(s-t\right)\sin\theta\right)ds}.\label{eq:olole-1-2-1-2}
\end{multline}
\begin{multline}
\left(\mathcal{T}^{\sigma}_{\tau,\tau'}M\right)^{B}_{4}\left(t,x,y\right)=\\
\Biggl({\textstyle {\displaystyle \int^{t}_{t+\frac{1}{c\cos\theta}x-\frac{b_{1}}{c\cos\theta}}}e^{\sigma\int^{s}_{t+\frac{1}{c\cos\theta}x-\frac{b_{1}}{c\cos\theta}}\rho\left(\left|M\right|\right)\left(r,x-c\left(r-t\right)\cos\theta,y-c\left(r-t\right)\sin\theta\right)dr}}\cdot\\
{\textstyle Q^{\sigma}_{4}\left(\left|M\right|\right)\left(s,x-c\left(s-t\right)\cos\theta,y-c\left(s-t\right)\sin\theta\right)ds}+{\textstyle \overline{N^{+}_{4}}\left(t,x,y\right)\Biggr)}\cdot\\
{\textstyle e^{-\sigma{\displaystyle \int^{t}_{t+\frac{1}{c\cos\theta}x-\frac{b_{1}}{c\cos\theta}}}\rho\left(\left|M\right|\right)\left(s,x-c\left(s-t\right)\cos\theta,y-c\left(s-t\right)\sin\theta\right)ds}}.\label{eq:olole-1-1-1-2-2}
\end{multline}
\begin{multline}
\left(\mathcal{T}^{\sigma}_{\tau,\tau'}M\right)^{C}_{4}\left(t,x,y\right)=\\
\Biggl({\textstyle {\displaystyle \int^{t}_{t+\frac{1}{c\sin\theta}y-\frac{b_{2}}{c\sin\theta}}}e^{\sigma\int^{s}_{t+\frac{1}{c\sin\theta}y-\frac{b_{2}}{c\sin\theta}}\rho\left(\left|M\right|\right)\left(r,x-c\left(r-t\right)\cos\theta,y-c\left(r-t\right)\sin\theta\right)dr}}\cdot\\
{\textstyle Q^{\sigma}_{4}\left(\left|M\right|\right)\left(s,x-c\left(s-t\right)\cos\theta,y-c\left(s-t\right)\sin\theta\right)ds}+{\textstyle \overline{N^{++}_{4}}\left(t,x,y\right)\Biggr)}\cdot\\
{\textstyle e^{-\sigma{\displaystyle \int^{t}_{t+\frac{1}{c\sin\theta}y-\frac{b_{2}}{c\sin\theta}}}\rho\left(\left|M\right|\right)\left(s,x-c\left(s-t\right)\cos\theta,y-c\left(s-t\right)\sin\theta\right)ds}}.\label{eq:olole-1-1-1-1-1-2-2}
\end{multline}
\end{defn}

\subsection{Fixed point formulation}
\begin{prop}
\label{aalalaooa}The integral operator $\mathcal{T}_{\tau,\tau'}$
and the family of integral operators $\{\mathcal{T}^{\sigma}_{\tau,\tau'}\}_{\sigma>0}$
defined above map the space $C(K_{\tau,\tau'};\mathbb{R})^{4}$ into
itself and satisfy the following properties:\\
 \ \ \ \ i) Any differentiable fixed point of $\mathcal{T}_{\tau,\tau'}$
is a solution to problem $\mathcal{P}_{\tau,\tau'}$, and conversely,
every solution to problem $\mathcal{P}_{\tau,\tau'}$ is a fixed point
of $\mathcal{T}_{\tau,\tau'}$.\\
 \ \ \ \ ii) There exists a constant $\sigma_{0}>0$ such that,
for all $\sigma\ge\sigma_{0}$, the operator $\mathcal{T}^{\sigma}_{\tau,\tau'}$
is non-negative and any differentiable fixed point of the operator
$\mathcal{T}^{\sigma}_{\tau,\tau'}$ is a solution to problem $\mathcal{P}_{\tau,\tau'}$. 
\end{prop}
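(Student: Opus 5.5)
The plan is to verify the properties for one characteristic family at a time, since the four components have the same structure with different velocities $v_1=c(\cos\theta,\sin\theta)$, $v_2=c(-\sin\theta,\cos\theta)$, $v_3=-v_2$, $v_4=-v_1$. I would write out the details for $N_1$ and get the others by the same computation with the sign changes already visible in Notation \ref{jskskk}.

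\emph{Continuity.} For $M$ continuous on the compact set $K_{\tau,\tau'}$, $Q(M)$ and $\rho(|M|)$ are continuous, so each piece $A,B,C$ (an integral of a continuous function along a characteristic with continuously varying limits, plus a composition of a $C^{1}$ datum with an affine map) is continuous. The indicator sets of the three pieces cover $K_{\tau,\tau'}$ and overlap only on the planes $H_1,H_2,H_3$ (resp.\ $H_4$–$H_{12}$ for the other components). So the operator maps into $C(K_{\tau,\tau'};\R)^4$ once the pieces agree on these interfaces. On $H_1$ the entry time $t-(x-a_1)/(c\cos\theta)$ equals $\tau$, so the integrals in $A$ and $B$ coincide, and $\overline{N^{\tau}_1}=\overline{N^{-}_1}$ there by \eqref{eq:kqiiq-3}. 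On $H_2$ the analogous identity holds using \eqref{eq:ksos}. On $H_3$, the backward characteristic hits the corner $(a_1,a_2)$ and \eqref{eq:slos-1-1} gives agreement. The three pieces also agree on the common line where all three meet. One subtlety: on the overlap sets the sum formula would double-count, so I read the definition as a piecewise definition, with the indicators of a partition up to those planes.

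\emph{Part i).} Along the characteristic $s\mapsto(s,x+c(s-t)\cos\theta,y+c(s-t)\sin\theta)$, the function $(\mathcal{T}M)_1$ satisfies $\frac{d}{ds}(\cdot)=Q(M)$ in each open region, because the entry point is constant along the characteristic. Hence a differentiable fixed point satisfies \eqref{eq:erter-1}. Evaluating at $t=\tau$, $x=a_1$ and $y=a_2$ recovers \eqref{eq:lsos}, \eqref{eq:losso} and \eqref{eq:mppos}. Conversely, if $N$ solves $\mathcal{P}_{\tau,\tau'}$, integrate the equation along the backward characteristic from $(t,x,y)$ until it first leaves $K_{\tau,\tau'}$: through $t=\tau$, $x=a_1$, or $y=a_2$, according to the region. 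Inserting the corresponding datum gives $N_1=(\mathcal{T}N)_1$.

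\emph{Part ii).} Non-negativity: with $\sigma_0=2cS$, Notation \ref{nnooauu} gives $Q^{\sigma}_i(|M|)\ge0$. The data are non-negative and exponentials are positive, so every component of $\mathcal{T}^{\sigma}M$ is $\ge0$. For the fixed-point claim, suppose $N=\mathcal{T}^{\sigma}N$ is differentiable; then $N\ge0$, so $|N|=N$. Set $u(s)=N_1$ along a characteristic and $E(s)=e^{\sigma\int\rho(N)}$. The formula says $uE=\int E\,Q^{\sigma}_1(N)+\text{datum}$. Differentiating gives
\[
u'E+\sigma\rho(N)uE=E\bigl(\sigma\rho(N)N_1+Q(N)\bigr),
\]
so $u'=Q(N)$. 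The boundary and initial values are read off as before, and this solves $\mathcal{P}_{\tau,\tau'}$.

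The main obstacle is the interface and continuity bookkeeping across the oblique planes $H_3,H_6,H_9,H_{12}$. These check that the backward characteristic reaches the correct corner, which depends on $\theta\in]0,\pi/2[$ and on the signs of $\cos\theta,\sin\theta$. I would do this carefully for component $1$ and transfer it to the others.
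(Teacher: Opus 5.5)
Your proposal is correct and follows essentially the same route as the paper. Both treat each component as the characteristic solution of a linear transport problem with the corresponding initial and boundary data. Both obtain non-negativity for $\sigma\ge 2cS$ from $Q^{\sigma}_{i}(|M|)\ge 0$, and both use $|M|=M$ at a fixed point so that the $\sigma\rho(M)M_{i}$ terms cancel and the original equations are recovered.

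Your explicit pasting argument goes beyond what the paper writes, which only asserts that the operators map into $C(K_{\tau,\tau'};\mathbb{R})^{4}$. Checking that the pieces agree across $H_{1},H_{2},H_{3}$ by means of the compatibility conditions fills that gap. Your remark that the indicator-sum must be read as a piecewise definition, to avoid double counting on those planes, is also a correction the paper omits.
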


\begin{proof}
For a fixed $M$, each component $\left(\mathcal{T}^{\sigma}_{\tau,\tau'}M\right)_{i}$
($i=1,\dots,4$) is obtained as the unique solution to a specific
equation, determined as follows: the first component $\left(\mathcal{T}^{\sigma}_{\tau,\tau'}M\right)_{1}$
satisfies 
\begin{equation}
{\textstyle \dfrac{\p N_{1}}{\p t}+c\cos\theta\dfrac{\p N_{1}}{\p x}}{\displaystyle +c\sin\theta\dfrac{\p N_{1}}{\p y}+\sigma\rho\left(\left|M\right|\right)N_{1}}=Q^{\sigma}_{1}\left(\left|M\right|\right),\;\left(t,x,y\right)\in\mathring{K}_{\tau,\tau'}\label{olopoz}
\end{equation}
subject to the conditions \eqref{eq:lsos} (for $i=1$) through \eqref{eq:mppos}.
\\
 The second component $\left(\mathcal{T}^{\sigma}_{\tau,\tau'}M\right)_{2}$
satisfies 
\[
\dfrac{\p N_{2}}{\p t}-c\sin\theta\dfrac{\p N_{2}}{\p x}+c\cos\theta\dfrac{\p N_{2}}{\p y}+\sigma\rho\left(\left|M\right|\right)N_{2}=Q^{\sigma}_{2}\left(\left|M\right|\right),\;\left(t,x,y\right)\in\mathring{K}_{\tau,\tau'}
\]
under the conditions \eqref{eq:lsos} (for $i=2$), \eqref{eq:mpsss},
\eqref{eq:lsoo-1}.\\
 The third component $\left(\mathcal{T}^{\sigma}_{\tau,\tau'}M\right)_{3}$
verifies 
\[
\dfrac{\p N_{3}}{\p t}+c\sin\theta\dfrac{\p N_{3}}{\p x}-c\cos\theta\dfrac{\p N_{3}}{\p y}+\sigma\rho\left(\left|M\right|\right)N_{3}=Q^{\sigma}_{3}\left(\left|M\right|\right),\;\left(t,x,y\right)\in\mathring{K}_{\tau,\tau'}
\]
with the conditions \eqref{eq:lsos} (for $i=3$), \eqref{ikis},
\eqref{eq:ikioi}.\\
 Finally, the fourth component $\left(\mathcal{T}^{\sigma}_{\tau,\tau'}M\right)_{4}$
is given by 
\begin{equation}
\dfrac{\p N_{4}}{\p t}-c\cos\theta\dfrac{\p N_{4}}{\p x}-c\sin\theta\dfrac{\p N_{4}}{\p y}+\sigma\rho\left(\left|M\right|\right)N_{4}=Q^{\sigma}_{4}\left(\left|M\right|\right),\;\left(t,x,y\right)\in\mathring{K}_{\tau,\tau'}\label{jzusjju}
\end{equation}
subject to \eqref{eq:lsos} (for $i=4$), \eqref{eq:ikioi-1}, \eqref{eq:oiiap}.

For a fixed $M$, the first component $\left(\mathcal{T}_{\tau,\tau'}M\right)_{1}$
is obtained as the unique solution to the equation 
\begin{equation}
\frac{\partial N_{1}}{\partial t}+c\cos\theta\frac{\partial N_{1}}{\partial x}+c\sin\theta\frac{\partial N_{1}}{\partial y}=Q(M),\quad(t,x,y)\in\mathring{K}_{\tau,\tau'},\label{ooapplao}
\end{equation}
subject to the conditions \eqref{eq:lsos} (for $i=1$) through \eqref{eq:mppos}.\\
 The second component $\left(\mathcal{T}_{\tau,\tau'}M\right)_{2}$
is the unique solution to the equation 
\begin{equation}
\dfrac{\partial N_{2}}{\partial t}-c\sin\theta\dfrac{\partial N_{2}}{\partial x}+c\cos\theta\dfrac{\partial N_{2}}{\partial y}=-Q(M),\;\left(t,x,y\right)\in\mathring{K}_{\tau,\tau'},
\end{equation}
subject to the exact same conditions imposed on $\left(\mathcal{T}^{\sigma}_{\tau,\tau'}M\right)_{2}.$\\
 Similarly, the third component $\left(\mathcal{T}_{\tau,\tau'}M\right)_{3}$
is the unique solution to the equation 
\begin{equation}
\dfrac{\partial N_{3}}{\partial t}+c\sin\theta\dfrac{\partial N_{3}}{\partial x}-c\cos\theta\dfrac{\partial N_{3}}{\partial y}=-Q(M),\;\left(t,x,y\right)\in\mathring{K}_{\tau,\tau'},
\end{equation}
subject to the exact same conditions imposed on $\left(\mathcal{T}^{\sigma}_{\tau,\tau'}M\right)_{3}.$\\
 Finally, the fourth component $\left(\mathcal{T}_{\tau,\tau'}M\right)_{4}$
is the unique solution to 
\begin{equation}
\dfrac{\partial N_{4}}{\partial t}-c\cos\theta\dfrac{\partial N_{4}}{\partial x}-c\sin\theta\dfrac{\partial N_{4}}{\partial y}=Q(M),\;\left(t,x,y\right)\in\mathring{K}_{\tau,\tau'},\label{ooppal}
\end{equation}
subject to the exact same conditions imposed on $\left(\mathcal{T}^{\sigma}_{\tau,\tau'}M\right)_{4}.$\\
 Now, if $M$ is derivable, we see from \eqref{ooapplao}-\eqref{ooppal}
with the corresponding conditions, that $\mathcal{T}_{\tau,\tau'}M=M$
if and only if $M$ is a solution to the problem $\mathcal{P}_{\tau,\tau'}$
defined as the system \eqref{eq:erter-1}-\eqref{eq:zzzfra-1} subject
to the initial and boundary conditions \eqref{eq:lsos}-\eqref{eq:oiiap}.
This completes the proof of statement (i) of the proposition.

We now show that the operator $\mathcal{T}^{\sigma}_{\tau,\tau'}$
is non-negative for $\sigma\geq2cS$. We know that $Q^{\sigma}_{i}\left(\left|M\right|\right)\geq0$
for all $M$ and all $i\in\{1,2,3,4\}$, provided that $\sigma\geq2cS$.
\\
 Furthermore, recall that $(t,x,y)\in K_{\tau,\tau'}\equiv[\tau,\tau']\times[a_{1},b_{1}]\times[a_{2},b_{2}]$
in equations \eqref{eq:oloooso-1-1}-\eqref{eq:olole-1-1-1-1-1-2-2}
which define the components of $\mathcal{T}^{\sigma}_{\tau,\tau'}M$.
It follows that the upper bound of integration in each formula is
greater than or equal to its corresponding lower bound. Since the
functions introduced in Notation ~\ref{jskskk} are non-negative
due to the non-negativity of the data, we conclude that the components
of $\mathcal{T}^{\sigma}_{\tau,\tau'}M$ are non-negative for $\sigma\geq2cS$.
Hence, for $\sigma\geq2cS$, the condition $\mathcal{T}^{\sigma}_{\tau,\tau'}M=M$
implies that $M$ is non-negative, which in turn yields $Q^{\sigma}_{i}(|M|)=Q^{\sigma}_{i}(M)$.
Consequently, by virtue of equations \eqref{olopoz} -- \eqref{ooppal}
and adopting the notation from \eqref{nnooauu}, we obtain: 
\begin{align}
\frac{\partial M_{1}}{\partial t}+c\cos\theta\frac{\partial M_{1}}{\partial x}+c\sin\theta\frac{\partial M_{1}}{\partial y} & =-\sigma\rho(M)M_{1}+Q^{\sigma}_{1}(M)=Q(M),\\
\frac{\partial M_{2}}{\partial t}-c\sin\theta\frac{\partial M_{2}}{\partial x}+c\cos\theta\frac{\partial M_{2}}{\partial y} & =-\sigma\rho(M)M_{2}+Q^{\sigma}_{2}(M)=-Q(M),\\
\frac{\partial M_{3}}{\partial t}+c\sin\theta\frac{\partial M_{3}}{\partial x}-c\cos\theta\frac{\partial M_{3}}{\partial y} & =-\sigma\rho(M)M_{3}+Q^{\sigma}_{3}(M)=-Q(M),\\
\frac{\partial M_{4}}{\partial t}-c\cos\theta\frac{\partial M_{4}}{\partial x}-c\sin\theta\frac{\partial M_{4}}{\partial y} & =-\sigma\rho(M)M_{4}+Q^{\sigma}_{4}(M)=Q(M).
\end{align}
Moreover, since $\mathcal{T}^{\sigma}_{\tau,\tau'}M$ satisfies conditions
\eqref{eq:lsos}--\eqref{eq:oiiap}, it follows from the fixed-point
relation $M=\mathcal{T}^{\sigma}_{\tau,\tau'}M$ that $M$ also satisfies
these conditions. Consequently, $M$ constitutes a solution to problem
$\mathcal{P}_{\tau,\tau'}$. This completes the proof of statement
(ii) of the proposition. 
\end{proof}

\subsection{Properties of the operators}

The operator $\mathcal{T}^{\sigma}_{\tau,\tau'}$ is continuous (
we can use arguments from \cite{sob almeida 2026 arxiv 1}, Section
4). 

It is easily seen that the estimate (5.1) in the paper \cite{sob almeida 2026 arxiv 1}
still holds for $\mathcal{T}_{\tau,\tau'}$. 
\begin{prop}
\textup{\label{vvvvv}}For all $M$ and $N:$ 
\begin{align}
\left\Vert \mathcal{T}_{\tau,\tau'}M-\mathcal{T}_{\tau,\tau'}N\right\Vert  & \leq\left(\tau'-\tau\right)\cdot4cS\left(\left\Vert M\right\Vert +\left\Vert N\right\Vert \right)\left\Vert M-N\right\Vert \label{eq:lsoopa-2-1}
\end{align}
where $\left\Vert \cdot\right\Vert $ is defined in Eq. \eqref{oosaz}. 
\end{prop}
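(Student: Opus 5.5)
The plan is to estimate each component of $\mathcal{T}_{\tau,\tau'}M-\mathcal{T}_{\tau,\tau'}N$ pointwise on $K_{\tau,\tau'}$. For a fixed $(t,x,y)$, exactly one of the indicator sets A, B or C determines the value. On overlaps of these sets the expressions agree by the compatibility conditions, so it is enough to bound the difference of the corresponding pieces.

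\textbf{Step 1: the data terms cancel.} In every piece, for instance $(\mathcal{T}_{\tau,\tau'}M)^{A}_{1}-(\mathcal{T}_{\tau,\tau'}N)^{A}_{1}$, the data terms $\overline{N^{\tau}_{1}}$, $\overline{N^{-}_{1}}$, etc. do not depend on $M$ and cancel. What remains is a single integral along the characteristic of $\pm\bigl(Q(M)-Q(N)\bigr)$.

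\textbf{Step 2: bound the length of the integration interval.} The integration variable $s$ runs from a lower limit $s_{0}$ to $t$. In region A, $s_{0}=\tau$. In region B or C, $s_{0}$ is the entrance time of the characteristic through the boundary, for example $t-\frac{x-a_{1}}{c\cos\theta}$. On the indicator set of that piece one has $s_{0}\ge\tau$; for instance $x-c(t-\tau)\cos\theta\le a_{1}$ gives exactly this. Hence $0\le t-s_{0}\le t-\tau\le\tau'-\tau$.

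There is a misprint in $(\mathcal{T}_{\tau,\tau'}M)^{A}_{4}$, where the lower limit is written as $0$. It should be read as $\tau$, consistent with $\mathcal{T}^{\sigma}$, and I would note this correction.

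\textbf{Step 3: algebraic estimate of the collision term.} Write
\[
N_{2}N_{3}-M_{2}M_{3}=(N_{2}-M_{2})N_{3}+M_{2}(N_{3}-M_{3}),
\]
and decompose $M_{1}M_{4}-N_{1}N_{4}$ similarly. This gives
\[
|Q(M)-Q(N)|\le 2cS\bigl(2\|N\|+2\|M\|\bigr)\|M-N\| = 4cS\bigl(\|M\|+\|N\|\bigr)\|M-N\|
\]
pointwise. The characteristic point $\bigl(s,x+c(s-t)\cos\theta,\dots\bigr)$ stays in $K_{\tau,\tau'}$ for $s\in[s_{0},t]$, by convexity of the rectangle and the definition of $s_{0}$. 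Therefore the sup norms apply there.

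\textbf{Step 4: combine.} Integrating the bound of Step 3 over an interval of length at most $\tau'-\tau$ gives the claimed bound for each piece. Taking the maximum over the four components and over the points of $K_{\tau,\tau'}$ yields \eqref{eq:lsoopa-2-1}.

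\textbf{Main obstacle.} The only delicate point is verifying in Step 2, for each of the twelve pieces, that the lower limit lies in $[\tau,t]$ and that the characteristic segment stays inside the domain. This amounts to checking the sign conditions in each indicator set. It is routine but must be done carefully given the $\theta$-dependent geometry. For example, in region C of component 1, the condition $y-c(t-\tau)\sin\theta\le a_{2}$ gives $s_{0}\ge\tau$. The second condition, involving $H_{3}$, ensures the characteristic hits the side $y=a_{2}$ before it hits the side $x=a_{1}$, so that $x$ remains in $[a_{1},b_{1}]$ along the characteristic.
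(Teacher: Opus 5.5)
Your proposal is correct. It is the argument the paper leaves implicit: the paper gives no separate proof, only the remark that estimate (5.1) of the earlier axis-aligned work ``still holds''. Your three steps are exactly what that transfer requires: the data terms cancel, you get $0\le t-s_{0}\le\tau'-\tau$ on each indicator set (with the $H_{3}$-type condition keeping the backward characteristic inside $K_{\tau,\tau'}$), and the bilinear splitting gives $|Q(M)-Q(N)|\le 4cS(\|M\|+\|N\|)\|M-N\|$. Your two corrections are also the right ones: reading the operator piecewise, since the literal sum of closed-set indicators would double-count on the common boundaries, and replacing the lower limit $0$ by $\tau$ in $(\mathcal{T}_{\tau,\tau'}M)^{A}_{4}$.
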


\begin{prop}
\label{prop::kko-1-1}\textup{ Let $\mathscr{E}^{+}_{\mathcal{H}}$
denote the subset of non-negative elements of $\mathscr{E}_{\mathcal{H}}.$
}$\left(\mathscr{E}^{+}_{\mathcal{H}}\right)^{4}$ is stable under
the operator\textup{ $\mathcal{T}^{\sigma}_{\tau,\tau'}$ for sufficiently
large $\sigma.$ } 
\end{prop}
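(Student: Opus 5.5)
We must show: for $M=(M_i)\in(\mathscr{E}^{+}_{\mathcal{H}})^{4}$ and $\sigma\ge 2cS$, the image $\mathcal{T}^{\sigma}_{\tau,\tau'}M$ is non-negative, continuous on $K_{\tau,\tau'}$, and has first-order partial derivatives defined, continuous and bounded on $\operatorname{int}(K_{\tau,\tau'})\setminus\bigcup^{12}_{m=1}H_m$. Non-negativity is already contained in the proof of Proposition \ref{aalalaooa}(ii): since $Q^{\sigma}_i(|M|)\ge0$ for $\sigma\ge 2cS$, the data are non-negative, and each integral runs from a smaller to a larger bound, every piece $(\mathcal{T}^{\sigma}_{\tau,\tau'}M)^{A,B,C}_i$ is non-negative. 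So the work is regularity, and we take $\sigma\ge 2cS$ as the meaning of ``sufficiently large''.

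\textbf{Step 1 (continuity).} For each $i$ the three indicator regions cover $K_{\tau,\tau'}$ and meet along the planes of $\mathcal{H}$. For $i=1$ these are $H_1$ ($x-c(t-\tau)\cos\theta=a_1$), $H_2$ and $H_3$. Inside each region the piece is continuous, being a composition of continuous maps with integrals of continuous integrands whose limits depend affinely on $(t,x,y)$. It then suffices to check that adjacent pieces agree on the common interface.
\begin{itemize}
\item On $H_1$ the lower limit $t-\frac{x-a_1}{c\cos\theta}$ equals $\tau$, so the integral parts of $A$ and $B$ coincide, and $\overline{N^{\tau}_1}=\overline{N^{-}_1}$ there by the compatibility condition \eqref{eq:kqiiq-3}.
\item On $H_2$ the same argument applies, using \eqref{eq:ksos}.
\item On $H_3$ the two lower limits coincide (the characteristic hits the corner $(a_1,a_2)$), and \eqref{eq:slos-1-1} gives $\overline{N^{-}_1}=\overline{N^{--}_1}$.
\end{itemize}
The cases $i=2,3,4$ are identical, using their respective compatibility conditions and the planes $H_4,\dots,H_{12}$. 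In $(\mathcal{T}^{\sigma})^{A}_4$ the lower limit written as $0$ should be read as $\tau$.

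\textbf{Step 2 (differentiability off the planes).} Fix a connected component of $\operatorname{int}(K_{\tau,\tau'})\setminus\bigcup H_m$. On it each component is given by a single formula, say $(\mathcal{T}^{\sigma})^{A}_1$. Write $\phi(s;t,x,y)=(s,x+c(s-t)\cos\theta,y+c(s-t)\sin\theta)$. The integrand contains $\rho(|M|)\circ\phi$ and $Q^{\sigma}_1(|M|)\circ\phi$, which are polynomials in the $M_j\circ\phi$; here $|M|=M$ since $M\ge0$.

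The main obstacle is that $M_j$ is differentiable only off $\bigcup H_m$, so we cannot differentiate under the integral sign naively: the segment $s\mapsto\phi(s;t,x,y)$ may cross or lie in planes where $\partial M_j$ is undefined. I would handle this as follows. Since $M_j$ is Lipschitz on $K_{\tau,\tau'}$ (shown in the proof of Proposition \ref{prop::odlp-1}), each map $(t,x,y)\mapsto M_j(\phi(s;t,x,y))$ is Lipschitz, with constant independent of $s$. For a fixed direction, the set of $s$ where $\phi(s)$ lies on some $H_m$ is finite unless the characteristic line is contained in $H_m$. The planes $H_3,H_6,H_9,H_{12}$ are parallel to certain characteristic directions; these are exactly the planes excluded, so a generic point of the component avoids containment. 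Dominated convergence then gives $\partial_x\int\cdots=\int\partial_x(\cdots)$, and the integrand derivative is bounded by a constant times $\mathscr{N}(M)(1+\mathscr{N}(M))$.

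Derivatives of the variable lower limits (e.g.\ $t-\frac{x-a_1}{c\cos\theta}$) contribute boundary terms bounded by $\frac{1}{c\cos\theta}$ times the sup of the integrand. Derivatives of the $\overline{N}$ terms are bounded because the data are $C^1$ with bounded derivatives. The exponential factors are differentiated by the chain rule, with the same dominated-convergence argument applied to $\int\rho\circ\phi$. Continuity of these derivatives on the component follows by dominated convergence again, using continuity of $\partial M_j$ off the planes and the fact that the exceptional $s$-set has measure zero.

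\textbf{Step 3 (conclusion).} Combining Steps 1 and 2, each $(\mathcal{T}^{\sigma}_{\tau,\tau'}M)_i$ belongs to $\mathscr{E}_{\mathcal{H}}$ and is non-negative. Hence $\mathcal{T}^{\sigma}_{\tau,\tau'}\bigl((\mathscr{E}^{+}_{\mathcal{H}})^{4}\bigr)\subset(\mathscr{E}^{+}_{\mathcal{H}})^{4}$ for $\sigma\ge 2cS$. The explicit derivative bounds from Step 2 are recorded for the later a priori estimate involving $\mu,\lambda,\delta$.
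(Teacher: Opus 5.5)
Your proof is correct, and its core is the paper's argument. The paper likewise fixes a point of $\operatorname{int}(K_{\tau,\tau'})\setminus\bigcup_{m}H_m$, notes that the characteristic through it meets the planes only finitely often, differentiates under the integral sign, and gets non-negativity from $\sigma\ge 2cS$. The differences are in completeness:
\begin{itemize}
\item The paper's proof never checks that $\mathcal{T}^{\sigma}_{\tau,\tau'}M$ is continuous on $K_{\tau,\tau'}$. Proposition \ref{aalalaooa} only asserts that $C(K_{\tau,\tau'};\mathbb{R})^{4}$ is preserved. Your Step 1, which matches the pieces on the interface planes via the lower limits of integration and the compatibility conditions, fills this gap, and the computations work for all four components.
\item The paper justifies differentiating under the integral only by saying the integrands are $C^1$ off finitely many $s$. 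But these exceptional values move with $(t,x,y)$, and $\partial M_j$ need not extend continuously to the planes. Your uniform Lipschitz bound (from the proof of Proposition \ref{prop::odlp-1}) plus dominated convergence, used for both existence and continuity of the derivatives, is what makes this step rigorous.
\item Your phrasing ``finite unless the line lies in some $H_m$'' is the accurate one. The paper claims each of $H_4,\dots,H_{12}$ is met for exactly one $s$. This fails for $H_{12}$, which is parallel to $H_3$ and contains the direction $(1,c\cos\theta,c\sin\theta)$. For $\theta=\pi/4$ it also fails for $H_5$ and $H_7$.
\end{itemize}

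Two small repairs are needed. First, replace ``a generic point of the component avoids containment'' by ``every point''. The characteristic segment ends at $(t,x,y)$ itself, which lies on no $H_m$, so no $H_m$ contains the line. This matters because $\mathscr{E}_{\mathcal{H}}$ requires derivatives at every point off the planes. Your list of parallel planes is also incomplete: every $H_m$ contains some characteristic direction, e.g.\ $H_1$ and $H_2$ contain that of $N_1$. Second, state explicitly that you read the indicator sums as a partition of $K_{\tau,\tau'}$. With the closed inequalities as written, both indicators equal $1$ on part of $H_1$, for instance. The literal formula then returns the sum of the two (equal) pieces there, and agreement of the pieces alone would not give continuity.
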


\begin{proof}
Assume that $M=(M_{1},M_{2},M_{3},M_{4})\in(\mathscr{E}^{+}_{\mathcal{H}})^{4}$.
According to the definition of $\left(\mathscr{E}_{\mathcal{H}}\right)^{4}$
given in Notation \ref{kqqkkd}, the partial derivatives $\dfrac{\partial M_{i}}{\partial t},\dfrac{\partial M_{i}}{\partial x},\dfrac{\partial M_{i}}{\partial y}$
(for $i=1,2,3,4$) are defined everywhere on $\operatorname{int}(K_{\tau,\tau'})\setminus\bigcup^{12}_{m=1}H_{m}$,
where they are continuous and bounded.

Let $\left(t,x,y\right)\in\operatorname{int}(K_{\tau,\tau'})\setminus{\displaystyle \cup^{12}_{m=1}}H_{m}.$
\\
 Setting $\left(t',x',y'\right)=\left(s,x+c\left(s-t\right)\cos\theta,y+c\left(s-t\right)\sin\theta\right)$
for $s\in\mathbb{R}$, it follows from \eqref{qqqq}-\eqref{zzzzz}
that $\left(t',x',y'\right)\notin H_{1}\cup H_{2}\cup H_{3}$ and
that for each $m\in\left\{ 4,\cdots,12\right\} ,$ $\left(t',x',y'\right)\in H_{m}$
for a unique value of $s$. Since $|M|=M$, the integrands in the
expressions \eqref{eq:oloooso-1-1}--\eqref{eq:olole-1-1-1-1-1-2-2}
possess continuous first-order partial derivatives except at a finite
number of points.

The continuity of both the integrands and their derivatives ensures
that $\dfrac{\partial\left(\mathcal{T}^{\sigma}_{\tau,\tau'}M\right)_{1}}{\partial t},$ $\dfrac{\partial\left(\mathcal{T}^{\sigma}_{\tau,\tau'}M\right)_{1}}{\partial x}$
and $\dfrac{\partial\left(\mathcal{T}^{\sigma}_{\tau,\tau'}M\right)_{1}}{\partial y}$
exist at $(t,x,y)$ and can be computed by differentiating under the
integral sign, which implies that they are continuous and bounded.
An identical approach applied to equations \eqref{eq:ppmms-2-1-3-1-1}-\eqref{eq:olole-1-1-1-1-1-2-2}
yields the same properties for $\dfrac{\partial\left(\mathcal{T}^{\sigma}_{\tau,\tau'}M\right)_{i}}{\partial t},$ $\dfrac{\partial\left(\mathcal{T}^{\sigma}_{\tau,\tau'}M\right)_{i}}{\partial x}$
and $\dfrac{\partial\left(\mathcal{T}^{\sigma}_{\tau,\tau'}M\right)_{i}}{\partial y}$
(for $i=2,3,4$), thereby establishing that $\mathcal{T}^{\sigma}_{\tau,\tau'}M\in(\mathscr{E}_{\mathcal{H}})^{4}$.

Furthermore, $\mathcal{T}^{\sigma}_{\tau,\tau'}M\geq0$ holds for
a sufficiently large $\sigma$ . We conclude that $\mathcal{T}^{\sigma}_{\tau,\tau'}M\in(\mathscr{E}^{+}_{\mathcal{H}})^{4}$. 
\end{proof}

\begin{prop}
\label{prop::opps-1-1} Let $R_{0}>0$ be fixed and suppose that $\tau'-\tau\leq1$.
Then for all positive $R$ such that $R\leq R_{0},$ we have for a
sufficiently large $\sigma$
\begin{equation}
\mathcal{T}^{\sigma}_{\tau,\tau'}\left(\mathscr{M}^{+}_{R}\right)\subset\mathscr{M}^{+}_{\mathfrak{p}_{\sigma}R^{2}+\mathfrak{q}_{\sigma}}.\label{soskki}
\end{equation}
Thus $\mathcal{T}^{\sigma}_{\tau,\tau'}$ is compact on $\mathscr{M}^{+}_{R}$
for all $R,$ $0<R\leq R_{0}.$ 
\end{prop}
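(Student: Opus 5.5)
Fix $M\in\mathscr{M}^{+}_{R}$ with $R\le R_{0}$ and $\sigma\ge2cS$. By Proposition \ref{prop::kko-1-1}, $\mathcal{T}^{\sigma}_{\tau,\tau'}M\in(\mathscr{E}^{+}_{\mathcal{H}})^{4}$, so only the bound $\mathscr{N}(\mathcal{T}^{\sigma}_{\tau,\tau'}M)\le\mathfrak{p}_{\sigma}R^{2}+\mathfrak{q}_{\sigma}$ needs proof. Compactness then follows: the image lies in $\mathscr{M}^{+}_{\mathfrak{p}_{\sigma}R^{2}+\mathfrak{q}_{\sigma}}$, which is relatively compact by Corollary \ref{prop::odlp-1-1}, and $\mathcal{T}^{\sigma}_{\tau,\tau'}$ is continuous. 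Each component is a sum of three pieces $A,B,C$ on regions with disjoint interiors, so it suffices to bound each piece and its first derivatives at points of $\operatorname{int}(K_{\tau,\tau'})\setminus\bigcup H_{m}$. I treat $(\mathcal{T}^{\sigma}_{\tau,\tau'}M)^{B}_{1}$ in detail; the others differ only in sign or in which of $c\cos\theta, c\sin\theta$ appears.

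\emph{Sup-norm bound.} Write $\Phi=\sigma\int_{s_{0}}^{t}\rho(M)\,dr$ along the characteristic, with $s_{0}$ the entry time. The piece has the form $\int_{s_{0}}^{t}e^{-\sigma\int_{s}^{t}\rho}Q^{\sigma}_{1}(M)\,ds+\overline{N^{-}_{1}}e^{-\Phi}$. Since $0\le e^{-\cdots}\le1$ and $|Q^{\sigma}_{1}(M)|\le(\sigma\cdot4R\cdot R)+2cS\cdot2R^{2}\le4(\sigma+cS)R^{2}$, with $t-s_{0}\le\tau'-\tau$, the piece is bounded by $4(\sigma+cS)R^{2}(\tau'-\tau)+\mathfrak{q}\le\mathfrak{p}_{\sigma}R^{2}+\mathfrak{q}_{\sigma}$, using $\tau'-\tau\le1$.

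\emph{Derivatives.} I differentiate under the integral sign, as justified in Proposition \ref{prop::kko-1-1}. For $\partial_{x}$ or $\partial_{y}$ (and $\partial_{t}$), three kinds of terms appear.

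First, the lower limit $s_{0}=t-\frac{x-a_{1}}{c\cos\theta}$ contributes a boundary term with factor $1/(c\cos\theta)$ times the integrand at $s_{0}$, which is $O((\sigma+cS)R^{2})$. In the $C$ pieces the analogous factor is $1/(c\sin\theta)$. These factors produce the terms $4/(c\cos\theta)$ and $4/(c\sin\theta)$ in $\mu$.

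Second, differentiating $Q^{\sigma}_{1}(M)$ along the shifted characteristic gives, by the chain rule, factors $1$ and $c\cos\theta$, $c\sin\theta$ times bilinear expressions bounded by $8(\sigma+cS)R^{2}$. Integrating over an interval of length at most $1$ gives the constants $8$, $8c\cos\theta$, $8c\sin\theta$ in $\mu$.

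Third, differentiating the exponential weights $e^{\pm\sigma\int\rho}$ brings down $\sigma\int\partial\rho$, which is bounded by $4\sigma R(\tau'-\tau)$ up to geometric factors. These terms are multiplied either by the integral term, giving $\lambda\sigma R_{0}(\tau'-\tau)(\sigma+cS)R^{2}$ after using $R\le R_{0}$, or by $\overline{N^{-}_{1}}$, giving $\delta\sigma R_{0}\mathfrak{q}$. Adding $\|\partial\overline{N^{-}_{1}}\|\le\mathfrak{q}$ reproduces $\mathfrak{q}_{\sigma}=\mathfrak{q}(1+\delta\sigma R_{0})$ and $\mathfrak{p}_{\sigma}R^{2}$.

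\emph{Main obstacle.} The main difficulty is the bookkeeping in the derivative step. I must check, piece by piece and component by component, that the stated constants $\mu,\lambda,\delta$ dominate every coefficient. This is especially delicate for the boundary-entry terms, where differentiating the lower limit also hits the exponent $\sigma\int_{s_{0}}\rho$. In those terms the $1/(c\cos\theta)$ factor multiplies $\sigma\rho$ at the boundary, and the resulting contribution must be absorbed into $\delta$ or $\lambda$.

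I will first try to fold these contributions into the $\frac{4}{c\cos\theta}$ and $\frac{16}{c\cos\theta}$ terms of $\delta$ and $\lambda$. To do this I will use the compatibility conditions, the fact that $\overline{N^{-}_{1}}\le\mathfrak{q}$, and $\rho(M)\le4R$. The regions and derivatives are symmetric across the four components, so it suffices to do this for component $1$, pieces $A,B,C$, and record the maximal coefficients.
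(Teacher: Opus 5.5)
Your plan is essentially the paper's proof: bound the sup norm and each first partial derivative of every piece $(\mathcal{T}^{\sigma}_{\tau,\tau'}M)^{A,B,C}_{i}$ by differentiating under the integral sign, using $\|Q^{\sigma}_{i}(M)\|_\infty\le 4(\sigma+cS)R^{2}$, $\|\nabla Q^{\sigma}_{i}(M)\|\le 8(\sigma+cS)R^{2}$, $\|\rho(M)\|_\infty,\|\nabla\rho(M)\|\le 4R$, $R\le R_0$ and $\tau'-\tau\le 1$; compactness then follows from the relative compactness of $\mathscr{M}^{+}_{\mathfrak{p}_{\sigma}R^{2}+\mathfrak{q}_{\sigma}}$. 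The term you flag as delicate is harmless and needs no compatibility conditions: in your form the weight inside the integral is $e^{-\sigma\int_{s}^{t}\rho}$, so the entry time $s_0$ enters only through $\overline{N^{-}_{1}}\,e^{-\sigma\int_{s_0}^{t}\rho}$, whose $x$-derivative contributes at most $\frac{4\sigma R_0}{c\cos\theta}\mathfrak{q}$, exactly what the $\frac{4}{c\cos\theta}$ term in $\delta$ absorbs (for $\partial_t$, also count the endpoint terms $Q^{\sigma}_{1}$ at $s=t$ and $s=s_0$, which account for the constant $8$ in $\mu$).
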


\begin{proof}
1) Let $M\in\mathscr{M}^{+}_{R}.$ By virtue of \eqref{eq:olole-1-3},
$\left(\mathcal{T}^{\sigma}_{\tau,\tau'}M\right)^{A}_{1}(t,x,y)$
can be represented as 
\begin{multline}
\left(\mathcal{T}^{\sigma}_{\tau,\tau'}M\right)^{A}_{1}\left(t,x,y\right)=\\
{\displaystyle \int^{t}_{0}}e^{-\sigma\int^{t}_{s}\rho\left(M\right)\left(r,x+c\left(r-t\right)\cos\theta,y+c\left(r-t\right)\sin\theta\right)dr}\cdot Q^{\sigma}_{1}\left(M\right)\\
\left(s,x+c\left(s-t\right)\cos\theta,y+c\left(s-t\right)\sin\theta\right)ds+\\
e^{-\sigma{\textstyle {\displaystyle \int^{t}_{0}}}\rho\left(M\right)\left(s,x+c\left(s-t\right)\cos\theta,y+c\left(s-t\right)\sin\theta\right)ds}\overline{N^{\tau}_{1}}\left(t,x,y\right)\label{eq:uiepom-1-1-1}
\end{multline}
Taking the supremum norm yields $\left\Vert \left(\mathcal{T}^{\sigma}_{\tau,\tau'}M\right)^{A}_{1}\right\Vert _{\infty}\leq\left(\tau'-\tau\right)\left\Vert Q^{\sigma}_{1}\left(M\right)\right\Vert _{\infty}+\left\Vert \overline{N^{\tau}_{1}}\right\Vert _{1}.$
Recalling from Notation \eqref{nnooauu} that $\left\Vert Q^{\sigma}_{i}(M)\right\Vert _{\infty}\leq(4\sigma+4cS)\left(\mathscr{N}(M)\right)^{2}$,
we arrive at the upper bound

\begin{equation}
\left\Vert \left(\mathcal{T}^{\sigma}_{\tau,\tau'}M\right)^{A}_{1}\right\Vert _{\infty}\leq\left(4\sigma+4cS\right)\left(\tau'-\tau\right)\left(\mathscr{N}\left(M\right)\right)^{2}+\left\Vert \overline{N^{\tau}_{1}}\right\Vert _{1}.\label{eq:hqiqok-2}
\end{equation}
2) Next, for $M\in\mathscr{M}^{+}_{R},$ 
\begin{multline}
\dfrac{\partial}{\partial t}\left(\mathcal{T}^{\sigma}_{\tau,\tau'}M\right)^{A}_{1}\left(t,x,y\right)=Q^{\sigma}_{1}\left(M\right)\left(t,x,y\right)+{\displaystyle \int^{t}_{0}}\left\{ \right.-\sigma\left[\right.\rho\left(M\right)\left(t,x,y\right)+\\
\int^{t}_{s}{\textstyle \left(\right.-\cos\theta\dfrac{\partial}{\partial x}\rho\left(M\right)-c\sin\theta\dfrac{\partial}{\partial y}\rho\left(M\right)\left.\right)}\\
\left(r,x+c\left(r-t\right)\cos\theta,y+c\left(r-t\right)\sin\theta\right)dr\left.\right]\\
e^{-\sigma\int^{t}_{s}\rho\left(M\right)\left(r,x+c\left(r-t\right)\cos\theta,y+c\left(r-t\right)\sin\theta\right)dr}\cdot Q^{\sigma}_{1}\left(M\right)\left(s,x+c\left(s-t\right)\cos\theta,y+c\left(s-t\right)\sin\theta\right)+\\
e^{-\sigma\int^{t}_{s}\rho\left(M\right)\left(r,x+c\left(r-t\right)\cos\theta,y+c\left(r-t\right)\sin\theta\right)dr}\cdot\\
{\textstyle \left(\right.-\cos\theta\dfrac{\partial}{\partial x}Q^{\sigma}_{1}\left(M\right)-c\sin\theta\dfrac{\partial}{\partial y}Q^{\sigma}_{1}\left(M\right)\left.\right)}\\
\left(s,x+c\left(s-t\right)\cos\theta,y+c\left(s-t\right)\sin\theta\right)\left.\right\} ds\\
-\sigma\left\{ \right.\rho\left(M\right)\left(t,x,y\right)+\int^{t}_{0}{\textstyle \left(\right.-\cos\theta\dfrac{\partial}{\partial x}\rho\left(M\right)-c\sin\theta\dfrac{\partial}{\partial y}\rho\left(M\right)\left.\right)}\\
\left(s,x+c\left(s-t\right)\cos\theta,y+c\left(s-t\right)\sin\theta\right)ds\left.\right\} \\
e^{-\sigma{\textstyle {\displaystyle \int^{t}_{0}}}\rho\left(M\right)\left(s,x+c\left(s-t\right)\cos\theta,y+c\left(s-t\right)\sin\theta\right)ds}.\overline{N^{\tau}_{1}}\left(t,x,y\right)+\\
e^{-\sigma{\textstyle {\displaystyle \int^{t}_{0}}}\rho\left(M\right)\left(s,x+c\left(s-t\right)\cos\theta,y+c\left(s-t\right)\sin\theta\right)ds}\cdot\dfrac{\partial}{\partial t}\overline{N^{\tau}_{1}}\left(t,x,y\right)\label{eq:lopds}
\end{multline}
and 
\begin{multline}
\dfrac{\partial}{\partial x}\left(\mathcal{T}^{\sigma}_{\tau,\tau'}M\right)^{A}_{1}\left(t,x,y\right)={\displaystyle \int^{t}_{0}}\left\{ \right.-\sigma\int^{t}_{s}{\textstyle \dfrac{\partial}{\partial x}\rho\left(M\right)}\left(r,x+c\left(r-t\right)\cos\theta,y+c\left(r-t\right)\sin\theta\right)dr\\
e^{-\sigma\int^{t}_{s}\rho\left(M\right)\left(r,x+c\left(r-t\right)\cos\theta,y+c\left(r-t\right)\sin\theta\right)dr}\cdot Q^{\sigma}_{1}\left(M\right)\left(s,x+c\left(s-t\right)\cos\theta,y+c\left(s-t\right)\sin\theta\right)+\\
e^{-\sigma\int^{t}_{s}\rho\left(M\right)\left(r,x+c\left(r-t\right)\cos\theta,y+c\left(r-t\right)\sin\theta\right)dr}\cdot\\
{\textstyle \dfrac{\partial}{\partial x}Q^{\sigma}_{1}\left(M\right)}\left(s,x+c\left(s-t\right)\cos\theta,y+c\left(s-t\right)\sin\theta\right)\left.\right\} ds\\
-\sigma\left\{ \right.\int^{t}_{0}{\textstyle \dfrac{\partial}{\partial x}\rho\left(M\right)}\left(s,x+c\left(s-t\right)\cos\theta,y+c\left(s-t\right)\sin\theta\right)ds\left.\right\} \\
e^{-\sigma{\textstyle {\displaystyle \int^{t}_{0}}}\rho\left(M\right)\left(s,x+c\left(s-t\right)\cos\theta,y+c\left(s-t\right)\sin\theta\right)ds}.\overline{N^{\tau}_{1}}\left(t,x,y\right)+\\
e^{-\sigma{\textstyle {\displaystyle \int^{t}_{0}}}\rho\left(M\right)\left(s,x+c\left(s-t\right)\cos\theta,y+c\left(s-t\right)\sin\theta\right)ds}\cdot\dfrac{\partial}{\partial x}\overline{N^{\tau}_{1}}\left(t,x,y\right)\label{eq:lopss}
\end{multline}
Using the bounds\\
 $\left\Vert Q^{\sigma}_{i}\left(M\right)\right\Vert _{\infty}\leq\left(4\sigma+4cS\right)\left(\mathscr{N}\left(M\right)\right)^{2},$$\left\Vert \nabla_{t,x,y}Q^{\sigma}_{i}(M)\right\Vert \leq\left(8\sigma+8cS\right)\left(\mathscr{N}\left(M\right)\right)^{2},$\\
 $\left\Vert \rho\left(M\right)\right\Vert _{\infty}\leq4\mathscr{N}\left(M\right)$
and $\left\Vert \nabla_{t,x,y}\rho\left(M\right)\right\Vert \leq4\mathscr{N}\left(M\right)$
, together with the constraints $\mathscr{N}(M)\leq R_{0}$ and $\tau'-\tau\leq1$,
we establish the following estimates: 
\begin{multline}
\left\Vert \dfrac{\partial}{\partial t}\left(\mathcal{T}^{\sigma}_{\tau,\tau'}M\right)^{A}_{1}\right\Vert _{\infty}\leq\left[\right.4+8c\cos\theta+8c\sin\theta+\\
\left(\right.16+16c\cos\theta+16c\sin\theta\left.\right)\sigma R_{0}\left(\tau'-\tau\right)\left.\right]\left(\sigma+cS\right)\left(\mathscr{N}\left(M\right)\right)^{2}+\\
\left[\right.1+\left(\right.4+4c\cos\theta+4c\sin\theta\left.\right)\sigma R_{0}\left.\right]\left\Vert \overline{N^{\tau}_{1}}\right\Vert _{1}\label{eq:oookkz-2}
\end{multline}
\begin{multline}
\left\Vert \dfrac{\partial}{\partial x}\left(\mathcal{T}^{\sigma}_{\tau,\tau'}M\right)^{A}_{1}\right\Vert _{\infty}\leq\left[\right.8+16\sigma R_{0}\left(\tau'-\tau\right)\left.\right]\left(\sigma+cS\right)\left(\mathscr{N}\left(M\right)\right)^{2}+\\
\left[\right.1+4\sigma R_{0}\left.\right]\left\Vert \overline{N^{\tau}_{1}}\right\Vert _{1}.\label{eq:oookkz-2-1}
\end{multline}
By an analogous argument, 
\begin{multline}
\left\Vert \dfrac{\partial}{\partial y}\left(\mathcal{T}^{\sigma}_{\tau,\tau'}M\right)^{A}_{1}\right\Vert _{\infty}\leq\left[\right.8+16\sigma R_{0}\left(\tau'-\tau\right)\left.\right]\left(\sigma+cS\right)\left(\mathscr{N}\left(M\right)\right)^{2}+\\
\left[\right.1+4\sigma R_{0}\left.\right]\left\Vert \overline{N^{\tau}_{1}}\right\Vert _{1}.\label{eq:oookkz-2-1-1}
\end{multline}
Furthermore, invoking \eqref{eq:hqiqok-2} and \eqref{eq:oookkz-2}--\eqref{eq:oookkz-2-1-1},
we have 
\begin{multline}
\left\Vert \left(\mathcal{T}^{\sigma}_{\tau,\tau'}M\right)^{A}_{1}\right\Vert _{1}\leq\left[\right.8+8c\cos\theta+8c\sin\theta+\\
\left(\right.16+16c\cos\theta+16c\sin\theta\left.\right)\sigma R_{0}\left(\tau'-\tau\right)\left.\right]\left(\sigma+cS\right)\left(\mathscr{N}\left(M\right)\right)^{2}+\\
\left[\right.1+\left(\right.4+4c\cos\theta+4c\sin\theta\left.\right)\sigma R_{0}\left.\right]\left\Vert \overline{N^{\tau}_{1}}\right\Vert _{1}.\label{eq:oookkz-2-2}
\end{multline}
Exploiting the structural similarity among the expressions for $\left(\mathcal{T}^{\sigma}_{\tau,\tau'}M\right)^{A}_{i}$
(cf. \eqref{eq:olole-1-3}, \eqref{eq:olole-1-1-1-1-1-1-1}, \eqref{eq:olole-1-2-2},
and \eqref{eq:olole-1-2-1-2}), we conclude that for each $i\in\{1,2,3,4\}$:
\begin{multline}
\left\Vert \left(\mathcal{T}^{\sigma}_{\tau,\tau'}M\right)^{A}_{i}\right\Vert _{1}\leq\left[\right.8+8c\cos\theta+8c\sin\theta+\\
\left(\right.16+16c\cos\theta+16c\sin\theta\left.\right)\sigma R_{0}\left(\tau'-\tau\right)\left.\right]\left(\sigma+cS\right)\left(\mathscr{N}\left(M\right)\right)^{2}+\\
\left[\right.1+\left(\right.4+4c\cos\theta+4c\sin\theta\left.\right)\sigma R_{0}\left.\right]\left\Vert \overline{N^{\tau}_{i}}\right\Vert _{1}.\label{eq:oookkz-2-2-1}
\end{multline}
3) Following a similar line of reasoning, it follows from \eqref{eq:olole-1-1-2}
that 
\begin{equation}
\left\Vert \left(\mathcal{T}^{\sigma}_{\tau,\tau'}M\right)^{B}_{1}\right\Vert _{\infty}\leq\left(4\sigma+4cS\right)\left(\tau'-\tau\right)\left(\mathscr{N}\left(M\right)\right)^{2}+\left\Vert \overline{N^{-}_{1}}\right\Vert _{1}.\label{eq:omplo}
\end{equation}
\begin{multline}
\left\Vert \dfrac{\partial}{\partial t}\left(\mathcal{T}^{\sigma}_{\tau,\tau'}M\right)^{B}_{1}\right\Vert _{\infty}\leq\left[\right.8+8c\cos\theta+8c\sin\theta+\\
\left(\right.32+16c\cos\theta+16c\sin\theta\left.\right)\sigma R_{0}\left(\tau'-\tau\right)\left.\right]\left(\sigma+cS\right)\left(\mathscr{N}\left(M\right)\right)^{2}+\\
\left[\right.1+\left(\right.8+4c\cos\theta+4c\sin\theta\left.\right)\sigma R_{0}\left.\right]\left\Vert \overline{N^{-}_{1}}\right\Vert _{1}\label{eq:oookkz-2-3}
\end{multline}
\begin{multline}
\left\Vert \dfrac{\partial}{\partial x}\left(\mathcal{T}^{\sigma}_{\tau,\tau'}M\right)^{B}_{1}\right\Vert _{\infty}\leq\left[\right.\dfrac{4}{c\cos\theta}+8+16\sigma R_{0}\left(\tau'-\tau\right)\left.\right]\left(\sigma+cS\right)\left(\mathscr{N}\left(M\right)\right)^{2}+\\
\left[\right.\dfrac{4\sigma}{c\cos\theta}R_{0}+4\sigma R_{0}+1\left.\right]\left\Vert \overline{N^{-}_{1}}\right\Vert _{1}\label{eq:oookkz-2-1-2}
\end{multline}
\begin{multline}
\left\Vert \dfrac{\partial}{\partial y}\left(\mathcal{T}^{\sigma}_{\tau,\tau'}M\right)^{B}_{1}\right\Vert _{\infty}\leq\left[\right.8+16\sigma R_{0}\left(\tau'-\tau\right)\left.\right]\left(\sigma+cS\right)\left(\mathscr{N}\left(M\right)\right)^{2}+\\
\left[\right.4\sigma R_{0}+1\left.\right]\left\Vert \overline{N^{-}_{1}}\right\Vert _{1}.\label{eq:oookkz-2-1-1-1}
\end{multline}
It follows from the inequalities \eqref{eq:omplo} -\eqref{eq:oookkz-2-1-1-1}
that 
\begin{multline}
\left\Vert \left(\mathcal{T}^{\sigma}_{\tau,\tau'}M\right)^{B}_{1}\right\Vert _{1}\leq\left[\right.8+\dfrac{4}{c\cos\theta}+8c\cos\theta+8c\sin\theta+\\
\left(\right.32+16c\cos\theta+16c\sin\theta\left.\right)\sigma R_{0}\left(\tau'-\tau\right)\left.\right]\left(\sigma+cS\right)\left(\mathscr{N}\left(M\right)\right)^{2}+\\
\left[\right.1+\left(\right.8+\dfrac{4}{c\cos\theta}+4c\cos\theta+4c\sin\theta\left.\right)\sigma R_{0}\left.\right]\left\Vert \overline{N^{-}_{1}}\right\Vert _{1}.\label{eq:oookkz-2-2-2}
\end{multline}
4) Due to the structural similarity between the expressions for $\left(\mathcal{T}^{\sigma}_{\tau,\tau'}M\right)^{B}_{1}$
and $\left(\mathcal{T}^{\sigma}_{\tau,\tau'}M\right)^{C}_{1}$ (see
Eqs. \eqref{eq:olole-1-1-2}, \eqref{eq:olole-1-1-1-1-3}) we have
\begin{multline}
\left\Vert \left(\mathcal{T}^{\sigma}_{\tau,\tau'}M\right)^{C}_{1}\right\Vert _{1}\leq\left[\right.8+\dfrac{4}{c\sin\theta}+8c\cos\theta+8c\sin\theta+\\
\left(\right.32+16c\cos\theta+16c\sin\theta\left.\right)\sigma R_{0}\left(\tau'-\tau\right)\left.\right]\left(\sigma+cS\right)\left(\mathscr{N}\left(M\right)\right)^{2}+\\
\left[\right.1+\left(\right.8+\dfrac{4}{c\sin\theta}+4c\cos\theta+4c\sin\theta\left.\right)\sigma R_{0}\left.\right]\left\Vert \overline{N^{--}_{1}}\right\Vert _{1}.\label{eq:oookkz-2-2-2-1}
\end{multline}
Combining Eqs. \eqref{eq:oookkz-2-2}, \eqref{eq:oookkz-2-2-2} and
\eqref{eq:oookkz-2-2-2}, we obtain 
\begin{multline}
\left\Vert \left(\mathcal{T}^{\sigma}_{\tau,\tau'}M\right)_{1}\right\Vert _{1}\leq\left[\right.8+\dfrac{4}{c\cos\theta}+\dfrac{4}{c\sin\theta}+8c\cos\theta+8c\sin\theta+\\
\left(\right.32+16c\cos\theta+16c\sin\theta\left.\right)\sigma R_{0}\left(\tau'-\tau\right)\left.\right]\left(\sigma+cS\right)\left(\mathscr{N}\left(M\right)\right)^{2}+\\
\left[\right.1+\left(\right.8+\dfrac{4}{c\cos\theta}+\dfrac{4}{c\sin\theta}+4c\cos\theta+4c\sin\theta\left.\right)\sigma R_{0}\left.\right]\\
\max\left\{ \right.\left\Vert \overline{N^{\tau}_{1}}\right\Vert _{1},\left\Vert \overline{N^{-}_{1}}\right\Vert _{1},\left\Vert \overline{N^{--}_{1}}\right\Vert _{1}\left.\right\} .\label{eq:oookkz-2-2-2-2}
\end{multline}
By analogy, considering the formulations of$\left(\mathcal{T}^{\sigma}_{\tau,\tau'}M\right)_{i},$
$\left(i=1,2,3,4\right)$ (cf. Eqs. \eqref{eq:oloooso-1-1} through
\eqref{eq:olole-1-1-1-1-1-2-2}), it follows that 
\begin{multline}
\mathscr{N}\left(\mathcal{T}^{\sigma}_{\tau,\tau'}M\right)\leq\left[\right.8+\dfrac{4}{c\cos\theta}+\dfrac{4}{c\sin\theta}+8c\cos\theta+8c\sin\theta+\\
\left(\right.32+16c\cos\theta+16c\sin\theta\left.\right)\sigma R_{0}\left(\tau'-\tau\right)\left.\right]\left(\sigma+cS\right)\left(\mathscr{N}\left(M\right)\right)^{2}+\\
\left[\right.1+\left(\right.8+\dfrac{4}{c\cos\theta}+\dfrac{4}{c\sin\theta}+4c\cos\theta+4c\sin\theta\left.\right)\sigma R_{0}\left.\right]\cdot\\
\max_{1\leq i\leq4}\Biggl\{\left\Vert \overline{N^{\tau}_{i}}\right\Vert _{1},\left\Vert \overline{N^{-}_{1}}\right\Vert _{1},\left\Vert \overline{N^{--}_{1}}\right\Vert _{1},\left\Vert \overline{N^{+}_{2}}\right\Vert _{1},\\
\left\Vert \overline{N^{--}_{2}}\right\Vert _{1},\left\Vert \overline{N^{-}_{3}}\right\Vert _{1},\left\Vert \overline{N^{++}_{3}}\right\Vert _{1},\left\Vert \overline{N^{+}_{4}}\right\Vert _{1},\left\Vert \overline{N^{++}_{4}}\right\Vert _{1}\Biggr\}.\label{eq:oookkz-2-2-2-2-1}
\end{multline}
Consequently, for any $\ensuremath{M\in\mathscr{M}^{+}_{R}},$ we
can write 
\begin{equation}
\mathscr{N}\left(\mathcal{T}^{\sigma}_{\tau,\tau'}M\right)\leq\mathfrak{p}_{\sigma}R^{2}+\mathfrak{q}_{\sigma}\label{eq:opoql-1}
\end{equation}
where the coefficients$\ensuremath{\mathfrak{p}_{\sigma}}$ and $\ensuremath{\mathfrak{q}_{\sigma}}$
are defined as in \ref{rrrrt} 
\begin{multline}
p_{\sigma}\equiv\left[\right.8+\dfrac{4}{c\cos\theta}+\dfrac{4}{c\sin\theta}+8c\cos\theta+8c\sin\theta+\\
\left(\right.32+16c\cos\theta+16c\sin\theta\left.\right)\sigma R_{0}\left(\tau'-\tau\right)\left.\right]\left(\sigma+cS\right)\label{eq:olqpq}
\end{multline}
and 
\begin{multline}
q_{\sigma}\equiv\left[\right.1+\left(\right.8+\dfrac{4}{c\cos\theta}+\dfrac{4}{c\sin\theta}+4c\cos\theta+4c\sin\theta\left.\right)\sigma R_{0}\left.\right]\cdot\\
\max_{1\leq i\leq4}\Biggl\{\left\Vert \overline{N^{\tau}_{i}}\right\Vert _{1},\left\Vert \overline{N^{-}_{1}}\right\Vert _{1},\left\Vert \overline{N^{--}_{1}}\right\Vert _{1},\left\Vert \overline{N^{+}_{2}}\right\Vert _{1},\\
\left\Vert \overline{N^{--}_{2}}\right\Vert _{1},\left\Vert \overline{N^{-}_{3}}\right\Vert _{1},\left\Vert \overline{N^{++}_{3}}\right\Vert _{1},\left\Vert \overline{N^{+}_{4}}\right\Vert _{1},\left\Vert \overline{N^{++}_{4}}\right\Vert _{1}\Biggr\}.\label{eq:qoplq}
\end{multline}
5) Since $\mathcal{T}^{\sigma}_{\tau,\tau'}\left(\mathscr{M}^{+}_{R}\right)\subset\left(\mathscr{E}^{+}_{\mathcal{H}}\right)^{4},$
Eq. \eqref{eq:opoql-1} implies that $\mathcal{T}^{\sigma}_{\tau,\tau'}\left(\mathscr{M}^{+}_{R}\right)\subset\mathscr{M}^{+}_{\mathfrak{p}_{\sigma}R^{2}+\mathfrak{q}_{\sigma}}.$
Since $\mathscr{M}^{+}_{\mathfrak{p}_{\sigma}R^{2}+\mathfrak{q}_{\sigma}}$
is relatively compact in $\left(C\left(\begin{array}{r}
K_{\tau,\tau'}\end{array};\R\right),\left\Vert \cdot\right\Vert \right)^{4},$ we conclude that $\mathcal{T}^{\sigma}_{\tau,\tau'}$ is compact
on $\mathscr{M}^{+}_{R}.$ 
\end{proof}

\section{Uniqueness, local solution and global extension }\label{sec:Local-solution}

To establish the uniqueness theorem for the current problem, we recall
that the estimate \eqref{eq:lsoopa-2-1} for the operator $\mathcal{T}_{\tau,\tau'}$,
combined with property (i) in Proposition \ref{aalalaooa}, are the
only tools required to prove uniqueness for the analogous problem
in \cite{sob almeida 2026 arxiv 1} (see in \cite{sob almeida 2026 arxiv 1}
, Theorem 2.1 and its proof in Section 5). Consequently, the uniqueness
theorem remains formally valid for the problem considered herein.

By applying the notations introduced in \eqref{ooodl}, \eqref{ppmmzp},
\eqref{eq:loi-1}, \eqref{kqiqiiq} and \eqref{sskks}, together with
the inclusion \eqref{soskki}, we recover the exact formalism used
in Sections 8 and 9 of \cite{sob almeida 2026 arxiv 1} to establish
both the local solution and its global extension. Specifically, let
$R_{0}>0$ be fixed, and following Section 7 in \cite{sob almeida 2026 arxiv 1},
we have: 
\begin{prop}
\label{izujuu}For a $\sigma$ sufficiently large, assume that $\mathfrak{q}<\dfrac{1}{4\mu\left(1+\delta\sigma R_{0}\right)\left(\sigma+cS\right)}$
and $\tau'-\tau\leq\min\left\{ 1;\dfrac{1}{\lambda\sigma R_{0}}\left(\dfrac{1}{4\mathfrak{q}_{\sigma}\left(\sigma+cS\right)}-\mu\right)\right\} ;$
then $\mathfrak{p}_{\sigma}\mathfrak{q}_{\sigma}\leq\dfrac{1}{4}.$
Under this assumption, for 

\begin{equation}
R=\dfrac{1-\sqrt{1-4\mathfrak{p}_{\sigma}\mathfrak{q}_{\sigma}}}{2\mathfrak{p}_{\sigma}}\label{eq:asloz-1-1-2}
\end{equation}
 $\mathscr{M}^{+}_{R}$ is stable under $\mathcal{T}^{\sigma}_{\tau,\tau'}$. 
\end{prop}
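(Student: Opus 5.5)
The plan is to prove the inequality $\mathfrak{p}_{\sigma}\mathfrak{q}_{\sigma}\leq\frac{1}{4}$ by directly unwinding the time-step assumption, and then to get the stability of $\mathscr{M}^{+}_{R}$ from the inclusion \eqref{soskki} of Proposition~\ref{prop::opps-1-1}, since $R$ in \eqref{eq:asloz-1-1-2} is the smaller root of $\mathfrak{p}_{\sigma}X^{2}-X+\mathfrak{q}_{\sigma}=0$. Throughout, $\sigma$ is taken large enough that Propositions~\ref{aalalaooa}, \ref{prop::kko-1-1} and \ref{prop::opps-1-1} apply (in particular $\sigma\geq2cS$).

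\textbf{Step 1: the time bound is meaningful.} By \eqref{ppmmzp}, $\mathfrak{q}_{\sigma}=\mathfrak{q}(1+\delta\sigma R_{0})$. Hence the hypothesis $\mathfrak{q}<\frac{1}{4\mu(1+\delta\sigma R_{0})(\sigma+cS)}$ is equivalent to $4\mu\,\mathfrak{q}_{\sigma}(\sigma+cS)<1$. It follows that $\frac{1}{4\mathfrak{q}_{\sigma}(\sigma+cS)}-\mu>0$. So the admissible interval for $\tau'-\tau$ is non-trivial, and it also satisfies $\tau'-\tau\leq1$, which Proposition~\ref{prop::opps-1-1} requires.

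\textbf{Step 2: $\mathfrak{p}_{\sigma}\mathfrak{q}_{\sigma}\leq\frac14$.} From $\tau'-\tau\leq\frac{1}{\lambda\sigma R_{0}}\bigl(\frac{1}{4\mathfrak{q}_{\sigma}(\sigma+cS)}-\mu\bigr)$ we get $\mu+\lambda\sigma R_{0}(\tau'-\tau)\leq\frac{1}{4\mathfrak{q}_{\sigma}(\sigma+cS)}$. Multiplying by $\sigma+cS$ and using \eqref{ooodl} gives $\mathfrak{p}_{\sigma}\leq\frac{1}{4\mathfrak{q}_{\sigma}}$, that is, $\mathfrak{p}_{\sigma}\mathfrak{q}_{\sigma}\leq\frac14$. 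If $\mathfrak{q}=0$ this holds trivially, and then $R=0$ is read as the limiting value.

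\textbf{Step 3: $R$ is admissible.} Since $\mathfrak{p}_{\sigma}>0$ and $1-4\mathfrak{p}_{\sigma}\mathfrak{q}_{\sigma}\geq0$, the number $R$ in \eqref{eq:asloz-1-1-2} is real and satisfies $\mathfrak{p}_{\sigma}R^{2}+\mathfrak{q}_{\sigma}=R$. Rationalizing gives $R=\frac{2\mathfrak{q}_{\sigma}}{1+\sqrt{1-4\mathfrak{p}_{\sigma}\mathfrak{q}_{\sigma}}}\leq2\mathfrak{q}_{\sigma}$. By Step 1, $2\mathfrak{q}_{\sigma}<\frac{1}{2\mu(\sigma+cS)}$. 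This is at most $R_{0}$ once $\sigma\geq\frac{1}{2\mu R_{0}}$, which is one more instance of "$\sigma$ sufficiently large". Therefore $0<R\leq R_{0}$.

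\textbf{Step 4: stability.} With $0<R\leq R_{0}$ and $\tau'-\tau\leq1$, Proposition~\ref{prop::opps-1-1} gives $\mathcal{T}^{\sigma}_{\tau,\tau'}(\mathscr{M}^{+}_{R})\subset\mathscr{M}^{+}_{\mathfrak{p}_{\sigma}R^{2}+\mathfrak{q}_{\sigma}}=\mathscr{M}^{+}_{R}$, which is the claim.

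The only delicate point I expect is Step 3, namely checking that $R\leq R_{0}$ so that \eqref{soskki} may be invoked. The plan is to handle it through the bound $R\leq2\mathfrak{q}_{\sigma}$ together with enlarging $\sigma$. Everything else is algebraic rearrangement of the definitions \eqref{ooodl} and \eqref{ppmmzp}.
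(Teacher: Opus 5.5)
Your proof is correct and is the argument the paper intends (the paper gives no proof of its own here and only defers to Section~7 of the earlier work): unwinding the step-size bound gives $\mathfrak{p}_{\sigma}\mathfrak{q}_{\sigma}\leq\frac{1}{4}$, $R$ is the smaller root of $\mathfrak{p}_{\sigma}X^{2}-X+\mathfrak{q}_{\sigma}=0$, and \eqref{soskki} then gives $\mathcal{T}^{\sigma}_{\tau,\tau'}(\mathscr{M}^{+}_{R})\subset\mathscr{M}^{+}_{R}$. Your Step~3 makes explicit the requirement $R\leq R_{0}$, which the paper never checks, but your choice $\sigma\geq\frac{1}{2\mu R_{0}}$ is incompatible with the hypothesis $R_{0}<\varPsi(\sigma)$ of the later theorems (that hypothesis is equivalent to $\gamma R_{0}<f_{\sigma}(R_{0})$, which forces $4\mu(\sigma+cS)R_{0}<1$), so your argument proves the proposition as stated but does not secure $R\leq R_{0}$ in the regime where the paper's global argument uses it.
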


Following Section 8 in \cite{sob almeida 2026 arxiv 1}, this leads
to the first part of Theorem \ref{thm:Suppose-.-Then-1-1-1} for the
local solution. Then, considering the expression of $\gamma$ \eqref{xwccv},
we obtain the following inequalities for any $\tau\geq0:$ \\
 $\left\Vert \overline{N^{\tau}_{1}}\right\Vert _{1}\leq\gamma\left\Vert N^{\tau}_{1}\right\Vert _{1},$$\left\Vert \overline{N^{-}_{1}}\right\Vert _{1}\leq\gamma\left\Vert N^{-}_{1}\right\Vert _{1},$$\left\Vert \overline{N^{--}_{1}}\right\Vert _{1}\leq\gamma\left\Vert N^{--}_{1}\right\Vert _{1},$\\
 $\left\Vert \overline{N^{\tau}_{2}}\right\Vert _{1}\leq\gamma\left\Vert N^{\tau}_{2}\right\Vert _{1},$$\left\Vert \overline{N^{+}_{2}}\right\Vert _{1}\leq\gamma\left\Vert N^{+}_{2}\right\Vert _{1},$$\left\Vert \overline{N^{--}_{2}}\right\Vert _{1}\leq\gamma\left\Vert N^{--}_{2}\right\Vert _{1},$\\
 $\left\Vert \overline{N^{\tau}_{3}}\right\Vert _{1}\leq\gamma\left\Vert N^{\tau}_{3}\right\Vert _{1},$$\left\Vert \overline{N^{-}_{3}}\right\Vert _{1}\leq\gamma\left\Vert N^{-}_{3}\right\Vert _{1},$$\left\Vert \overline{N^{++}_{3}}\right\Vert _{1}\leq\gamma\left\Vert N^{++}_{3}\right\Vert _{1},$\\
 $\left\Vert \overline{N^{\tau}_{4}}\right\Vert _{1}\leq\gamma\left\Vert N^{\tau}_{4}\right\Vert _{1},$$\left\Vert \overline{N^{+}_{4}}\right\Vert _{1}\leq\gamma\left\Vert N^{+}_{4}\right\Vert _{1},$$\left\Vert \overline{N^{++}_{4}}\right\Vert _{1}\leq\gamma\left\Vert N^{++}_{4}\right\Vert _{1}$.\\
 Using the definition of the parameter $\mathfrak{q}$ \eqref{eq:loso-1-1-2},
it follows that 
\begin{multline*}
\mathfrak{q}<\gamma\cdot{\displaystyle \max_{1\leq i\leq4}\left\{ \right.\left\Vert N^{\tau}_{i}\right\Vert _{1},\left\Vert N^{-}_{1}\right\Vert _{1},\left\Vert N^{--}_{1}\right\Vert _{1},\left\Vert N^{+}_{2}\right\Vert _{1},\left\Vert N^{--}_{2}\right\Vert _{1},}\\
\left\Vert N^{-}_{3}\right\Vert _{1},\left\Vert N^{++}_{3}\right\Vert _{1},\left\Vert N^{+}_{4}\right\Vert _{1},\left\Vert N^{++}_{4}\right\Vert _{1}\left.\right\} .
\end{multline*}
This is used to prove the second part of Theorem \ref{thm:Suppose-.-Then-1-1-1},
which corresponds exactly to Corollary 8.3 in \cite{sob almeida 2026 arxiv 1}.

Theorem \ref{thm:Suppose-.-Then-1-2} for the global solution follows
thereafter, as the proof falls into the same formalism as the global
extension in \cite{sob almeida 2026 arxiv 1} (Theorem 2.3 and its
proof in Section 9).

\section*{Conclusion}

In this paper, we have considered the initial boundary value problem
for the general Broadwell four-velocity planar model. We have successfully
defined parameters analogous to those in \cite{sob almeida 2026 arxiv 1},
enabling the direct applicability of the formalism from \cite{sob almeida 2026 arxiv 1}
to prove the existence of a unique local classical solution, as well
as a unique global solution achieved through a precise threshold condition
imposed on the uniform norms of the data and their derivatives.

Crucially, the success of this approach under generalized geometric
conditions validates our initial objective: creating a flexible and
universal analytical tool for discrete kinetic theory. Consequently,
future research will focus on extending this fixed-point and threshold
methodology to abstract discrete velocity models.

\end{document}